\newcommand{\cG}{\mathcal{G}}
\newcommand{\cJ}{\mathcal{J}}
\newcommand{\cO}{\mathcal{O}}
\newcommand{\cS}{\mathcal{S}}
\newcommand{\cM}{\mathcal{M}}
\newcommand{\cC}{\mathcal{C}}
\newcommand{\cD}{\mathcal{D}}
\newcommand{\cU}{\mathcal{U}}
\renewcommand{\_}[1]{_{\left( #1 \right)}}
\renewcommand{\^}[1]{^{\left( #1 \right)}}
\newcommand{\ot}{{\otimes}}
\newcommand{\ku}{\Bbbk}
\newcommand\fInd{\mathsf{Ind}}
\newcommand\fL{\mathsf{L}}
\newcommand\ofM{\overline{\mathsf{M}}}
\newcommand\fM{\mathsf{M}}
\newcommand\fN{\mathsf{N}}
\newcommand\fP{\mathsf{P}}
\newcommand\fQ{\mathsf{Q}}
\newcommand\fW{\mathsf{W}}
\newcommand{\Z}{{\mathbb Z}}
\newcommand{\C}{{\mathbb C}}
\newcommand{\Sn}{{\mathbb S}}
\newcommand{\BV}{{\mathfrak B}}
\newcommand{\ydh}{{}^H_H\mathcal{YD}}
\newcommand{\ydk}{{}^K_K\mathcal{YD}}
\newcommand{\End}{\operatorname{End}}
\newcommand{\Inf}{\operatorname{Inf}}
\newcommand{\ch}{\operatorname{ch}}
\newcommand{\chgr}{\operatorname{ch}^{\bullet}}
\newcommand\Hom{\operatorname{Hom}}
\newcommand\e{\varepsilon}
\numberwithin{equation}{section}
\newtheorem{lema}{Lemma}[section]
\newtheorem{theorem}[lema]{Theorem}
\newtheorem{cor}[lema]{Corollary}
\newtheorem{prop}[lema]{Proposition}
\newtheorem{question}[lema]{Question}
\newtheorem{question-app}{Question}
\theoremstyle{definition}
\theoremstyle{remark}
\newcommand{\onabla}{\overline{\nabla}}
\newcommand{\oV}{\overline{V}}
\newcommand{\oR}{\overline{R}}
\newcommand{\bU}{\mathbf{U}}
\newcommand{\bJ}{\mathbf{J}}
\newcommand{\kuu}{\mathfrak{u}}
\def\Tenint{\@ifnextchar_{\@Tenintsub}{\@Tenintnosub}}
\def\@Tenintsub_#1{\mathchoice{\mathbin{\underline{\mathop{\otimes}}}_{#1}}%
  {\underline{\otimes}_{#1}}{\underline{\otimes}_{#1}}{\underline{\otimes}^L_{#1}}}
\def\@Tenintnosub{\mathbin{\underline{\mathop{\otimes}}}}
\title{On Hopf algebras with triangular decomposition}
\author{Cristian Vay}
\address{Universidad Nacional de C\'ordoba, Facultad de Matem\'atica, Astronom\'ia, F\'isica y Computaci\'on, CIEM--CONICET, C\'ordoba, Rep\'ublica Argentina}
\email{vay@famaf.unc.edu.ar}
\thanks{Partially supported by CONICET, Secyt (UNC), FONCyT PICT 2016-3957 and MathAmSud project GR2HOPF}
\subjclass[2010]{Primary 16T05, 20G05, 16P10}
\begin{document}

\begin{abstract}
In this survey, we first review some known results on the representation theory of algebras with triangular decomposition, including the classification of the simple modules. We then discuss a recipe to construct Hopf algebras with 
triangular decomposition. Finally, we extend to these Hopf algebras the main results of \cite{proj-bgg} regarding projective modules over Drinfeld doubles of bosonizations of Nichols algebras and groups.
\end{abstract}

\maketitle

\section{Introduction}


Over the last years there have been remarkable progress on the problem of classifying finite-dimensional Hopf algebras. New methods to construct families with different properties were introduced and a lot of examples are now at hand. This has motivated an increasing interest in their representation theory.
The family of pointed Hopf algebras over abelian groups is deeply related to the universal enveloping of semisimple Lie algebras. It is natural then to draw inspiration from Lie theory to study modules in this context. We would like to generalize, for instance, the following well-known facts in Lie theory:
\begin{itemize}
 \item[$(*)$] The simple $U(\mathfrak{g})$-modules (in a suitable category) are classified by their highest weights. 
 \item[$(**)$] The Weyl character formula for the simple modules.
\end{itemize}

Let us briefly mention some works extending these properties to different families of Hopf algebras. Lusztig~\cite{MR954661} has demonstrated that $(*)$ and $(**)$ hold for the {\it$q$-analogs} $U_{q}(\mathfrak{g})$, for any symmetrizable Kac--Moody algebra $\mathfrak{g}$. We can see the $q$-analogs $U_{q}(\mathfrak{g})$ as examples of a more general family of pointed Hopf algebras 
introduced by Andruskiewitsch--Schneider. In \cite{MR2732981}, Andruskiewitsch--Radford--Schneider have extended $(*)$ and $(**)$ to this family following the strategy of Lusztig. This family also includes the pointed Hopf algebras previously considered in \cite{MR2407847} (associated to Cartan matrices of finite type) and in \cite{MR2051731,MR2338616,MR2642565} (multi-parametric 
version of $U_{q}(\mathfrak{g})$).

The character formula for the {\it $q$-divided power algebra} $\cU_q(\mathfrak{g})$ (with $q$ a root of unity of certain order) is a consequence of several 
results. First, Kashiwara--Tanisaki proved a character formula for affine Lie algebras at a negative level. Then, Kazhdan--Lusztig (and Lusztig) constructed a functor between suitable categories of modules over the above algebras which translate the character formulas from one setting to the other. Moreover, Andersen--Jantzen--Soergel \cite{MR1272539} demonstrated that the character formulas for the 
small quantum group $\kuu_q(\mathfrak{g})$ and the restricted enveloping Lie algebra $U^{[p]}(\mathfrak{g})$ are the same for large primes (and these 
are independent of the characteristic). Using these formulas and the Steinberg's tensor product theorem, one can deduce the formulas for $\cU_q(\mathfrak{g})$, and for the corresponding semisimple connected and simply connected algebraic group in positive characteristic. The reader can find more details and precise references on the various character formulas in the fascinating survey by Jantzen \cite{MR2528469}.

The above Hopf algebras can be constructed via a Drinfeld double procedure, starting from a pointed Hopf algebra over an abelian group. It is natural then to consider other types of Drinfeld doubles. In \cite{MR2279242}, Krop--Radford prove that $(*)$ holds for the Drinfeld double of a finite-dimensional Hopf algebra $H$ such that its coradical $H_0$ is a Hopf subalgebra complemented by a nilpotent Hopf ideal. In this setting, the Drinfeld double of $H_0$ plays the role of the Cartan subalgebra and their simple modules are the corresponding weights (unlike Lie theory, they are not necessarily one-dimensional). Different proofs were given for $H$ being the bosonization of a finite-dimensional Nichols algebra over a finite abelian group by Heckenberger--Yamane \cite{MR2840165} and by Pogorelsky and the author \cite{PV2} for general finite groups. Andruskiewitsch--Angiono \cite{andruskiewitsch-angiono-basic} recently extended the proof of \cite{PV2} to bosonizations over any finite-dimensional Hopf algebra. In 
this great generality, a character formula is not 
expected and we should restrict ourselves, for instance, to certain families of Nichols algebras. We refer the reader to \cite{doi:10.1080/00927872.2017.1357726,MR2504492,PV2,MR1743667} to see how the simple modules look like in some explicit examples.


\

In this survey, we first review a general framework due to Bellamy--Thiel~\cite{Bellamy-Thiel-1} and Bonnaf\'e--Rouquier~\cite[\S 
F.2]{bonnafe-rouquier-1} which ensures that $(*)$ holds. Explicitly, this is guaranteed for any graded algebra with {\it triangular decomposition} (under some mild extra conditions). Moreover, the simple modules can be obtained as quotients of {\it standard (Verma) modules} and, under certain assumptions, this leads to the construction of a highest weight category. We then discuss a recipe to construct Hopf algebras with triangular decomposition. We basically rewrite the Drinfeld double construction in order to obtain directly the triangular decomposition. The small quantum groups and the Drinfeld doubles of bosonizations of Nichols algebras can be obtained  via this procedure. Finally, we extend to this wider class of Hopf algebras some results of \cite{proj-bgg}, regarding projective modules over Drinfeld doubles of bosonizations of finite-dimensional Nichols algebras over groups.  These results rely fundamentally on properties of the Nichols algebra.

\

Roughly speaking, the main idea that we would like to share with the reader is the following: if your favorite Hopf algebra admits a suitable triangular decomposition, then its simple modules are classified by their highest weights and you can construct them as quotients of standard modules. 
Thus, the major problem to address is to compute the weight decomposition of the simple modules. 

\

The paper is organized as follows. The general framework of algebras with decomposition is summarized in Section \ref{sec:a general framework}. We fix the notation and conventions concerning Hopf and Nichols algebras in Section \ref{sec:Preliminaries}. The recipe to construct Hopf algebras with triangular decomposition is explained in Section \ref{sec:a general construction} and the representation theory of them is studied in Section \ref{sec:ext de proj-bgg}.

\subsection*{Acknowledgements}

I thank Nicol\'as Andruskiewitsch and Dmitri Nikshych for the invitation to participate in the special session ``Hopf algebras and tensor categories'' at the Mathematical Congress of the Americas 2017. This work was carried out during a research stay at the Universit\'e Clermont Auvergne supported by CONICET. I thank Simon Riche for his warm hospitality, for the interesting discussions on character formulas and for his willingness to answer all my questions. I also thank Ulrich Thiel for the discussion on \cite{Bellamy-Thiel-1}. I am grateful to the referee and Gast\'on Garc\'ia for the careful reading of the manuscript and their useful comments.

\section{A general framework}\label{sec:a general framework}

Throughout this work, $\ku$ is a field and {\it graded} always means $\Z$-graded. Let $A$ be a graded algebra. We denote by ${}_A\cM$ the category of finite-dimensional left $A$-modules and by ${}_A\cG$ the category of graded finite-dimensional left $A$-modules with morphisms preserving the grading.

We say that a graded algebra $A=\oplus_{n\in\Z}A_n$ admits a {\it triangular decomposition} if there 
exist graded subalgebras $A^-$, $T$ and $A^+$ such that the multiplication
\begin{align}\label{eq:triangular decomposition}
\tag{td}A^-\ot T\ot A^+\longrightarrow A
\end{align}
gives a linear isomorphism and the following properties hold
  \begin{enumerate}[label=(td\arabic*)]
  \item $A^{\pm}\subseteq\oplus_{n\in\Z_{\pm}}A_n$ and $T\subseteq A_0$.
  \item $(A^{\pm})_0=\ku$.
  \item $B^{\pm}:=A^{\pm}T=TA^{\pm}$.
\end{enumerate}

We denote by $\Lambda$ a complete set of non-isomorphic finite-dimensional simple $T$-modules. The elements of $\Lambda$ are called {\it weights}. If $\lambda\in\Lambda$ is a $T$-submodule of an $A$-module $\fN$ such that $(B^+)_{>0}\cdot\lambda=0$ (resp. $(B^-)_{<0}\cdot\lambda=0$), we say that $\lambda$ is a {\it highest weight} (resp. {\it lowest weight}) of $\fN$. In such a case, $\lambda$ is a simple $B^+$-submodule (resp. $B^-$-submodule) isomorphic to $\Inf^{B^+}_T(\lambda)$ (resp. $\Inf^{B^-}_T(\lambda)$), where $\Inf^{B^{\pm}}_T:{}_{T}\cM\rightarrow{}_{B^{\pm}}\cM$ denotes the functor induced by the projection $B^{\pm}\twoheadrightarrow T$. If $\fN$ is also generated by $\lambda$, we say that $\fN$ is a {\it highest weight module} (resp. {\it lowest weight module}).

The {\it proper standard module} associated to $\lambda\in\Lambda$ is
\begin{align}\label{eq:proper standard modules}
\ofM(\lambda)=A\ot_{B^+}\Inf^{B^+}_T(\lambda)
\end{align}
We denote by $\fL(\lambda)$ the head of $\ofM(\lambda)$. Then $\ku\ot_{B^+}\Inf^{B^+}_T(\lambda)$ is a highest weight of $\ofM(\lambda)$ isomorphic to $\lambda$, and so is its image in $\fL(\lambda)$. Hence $\ofM(\lambda)$ and $\fL(\lambda)$ are highest weight modules.

\subsection{The finite-dimensional case}\label{subsec:finite case}
In this subsection, we assume that $A$ is a finite-dimensional algebra with triangular decomposition as above.
Therefore the simple and projective modules over $A$ admit a 
grading by \cite{MR659212}.

Let $P_T(\lambda)$ be the projective cover of $\lambda\in\Lambda$. The {\it standard module} associated to $\lambda\in\Lambda$ is
\begin{align}\label{eq:standard modules}
\fM(\lambda)=A\ot_{B^+}\Inf^{B^+}_T(P_T(\lambda)).
\end{align}
Notice that 
\begin{align}\label{eq:standard modules semisimple case}
\fM(\lambda)=\ofM(\lambda)\quad\mbox{if $T$ is semisimple.} 
\end{align}
The {\it proper costandard module} $\onabla(\lambda)$ and {\it costandard module} $\nabla(\lambda)$ associated to $\lambda$ are defined in a dual way.\footnote{In \cite{Bellamy-Thiel-1} the standard modules are denoted by $\Delta$ but, as it is usual in Hopf algebra theory, we keep  $\Delta$ to denote the comultiplication.}


\begin{theorem}[\cite{MR1136900,Bellamy-Thiel-1}]\label{prop:general facts}
Assume that $T$ is split, {\it i.e.} $\End_T(\lambda)=\ku$ for all $\lambda\in\Lambda$. Then:
 \begin{enumerate}[label=(\roman*)]
  \item The head $\fL(\lambda)$ of $\ofM(\lambda)$ is simple for all $\lambda\in\Lambda$.
  \item Every simple $A$-module is isomorphic to $\fL(\lambda)$ for a unique $\lambda\in\Lambda$.
  \item\label{item:standard filtration} The projective cover $\fP(\lambda)$ of $\fL(\lambda)$ admits a {\it standard filtration} for all $\lambda\in\Lambda$, {\it i.e.} there exists a chain of submodules 
$0=\fN_0\subset\fN_1\subset\cdots\subset\fN_n=\fP(\lambda)$ such that for each $i$, $\fN_i/\fN_{i-1}\simeq\fM(\lambda_i)$ for some $\lambda_i\in\Lambda$.
  \item {\it Brauer Reciprocity} $[\fP(\lambda):\fM(\mu)]=[\onabla(\mu):\fL(\lambda)]$ holds for all $\lambda,\mu\in\Lambda$, {\it i.e.} the number of subquotients isomorphic to 
$\fM(\mu)$ in a standard filtration of $\fP(\lambda)$ is equal to the number of composition factors of type $\fL(\lambda)$ of $\onabla(\mu)$. Moreover, this number is independent of the chosen filtration.
\end{enumerate} 
Dual properties are also satisfied by (proper) costandard modules and the injective hull of $\fL(\lambda)$. 
\qed
\end{theorem}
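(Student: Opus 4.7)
\emph{The plan.} For items (i) and (ii), I would exploit the grading on $\ofM(\lambda)$. By the triangular decomposition, $\ofM(\lambda)\cong A^-\otimes\lambda$ as graded vector spaces, concentrated in non-positive degrees with degree-zero piece equal to $\lambda$. Since $\ofM(\lambda)$ is generated as an $A$-module by this degree-zero piece, the sum of all graded submodules contained in strictly negative degrees is the unique maximal graded submodule, so $\fL(\lambda)$ is simple in ${}_A\cG$, hence in ${}_A\cM$ by \cite{MR659212}. For (ii), any simple $A$-module $S$ admits a graded lift by \cite{MR659212}; its top nonzero graded piece is killed by $(A^+)_{>0}$ for degree reasons, hence is a $B^+$-module via the augmentation $B^+\twoheadrightarrow T$, and contains a simple $T$-submodule $\lambda$. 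Tensor--hom adjunction for $A\otimes_{B^+}(-)$ then yields a nonzero and hence surjective morphism $\ofM(\lambda)\twoheadrightarrow S$, so $S\cong\fL(\lambda)$; uniqueness of $\lambda$ follows since $\lambda$ is recovered as the top graded piece of $\fL(\lambda)$.

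\emph{Standard filtration of the regular module.} For (iii) and (iv), I would first build a standard filtration of the left regular $A$-module. Consider the graded two-sided ideals $F^n=T\cdot A^+_{\geq n}$ of $B^+$; they are left $B^+$-submodules because $A^+T=TA^+$ by (td3), and the associated graded pieces satisfy $F^n/F^{n+1}\cong\Inf^{B^+}_T(T)\otimes_{\ku}A^+_n$, where $A^+_n$ appears merely as a multiplicity space. Since (td) presents $A$ as a free right $B^+$-module (via $A\cong A^-\otimes B^+$), the functor $A\otimes_{B^+}(-)$ is exact and sends $\{F^n\}$ to a filtration of $A$. Decomposing the left regular $T$-module as $T=\bigoplus_\mu P_T(\mu)^{\oplus m_\mu}$, each layer becomes a direct sum of modules of the form $\fM(\mu)=A\otimes_{B^+}\Inf^{B^+}_TP_T(\mu)$, yielding a standard filtration of $A$.

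\emph{Transfer to $\fP(\lambda)$ and Brauer reciprocity.} To upgrade this filtration of $A$ to a standard filtration of each indecomposable projective summand $\fP(\lambda)$, I would invoke the Cline--Parshall--Scott / Dlab--Ringel formalism for standardly stratified algebras, whose essential input is the vanishing $\Ext^1_A(\fM(\lambda),\onabla(\mu))=0$ for all $\lambda,\mu\in\Lambda$. I would prove it by a graded degree argument combined with the adjunction $\Ext^*_A(A\otimes_{B^+}-,-)\cong\Ext^*_{B^+}(-,\Res(-))$, noting that $\fM(\lambda)\cong A^-\otimes P_T(\lambda)$ is concentrated in non-positive degrees while $\onabla(\mu)$ is dually concentrated in non-negative degrees, so an extension splits after comparing extremal degree pieces; the ungraded vanishing again follows from \cite{MR659212}. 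Standard homological theory then produces standard filtrations of the individual $\fP(\lambda)$, with Brauer reciprocity following from the identities $[\fP(\lambda):\fM(\mu)]=\dim_{\ku}\Hom_A(\fP(\lambda),\onabla(\mu))=[\onabla(\mu):\fL(\lambda)]$, together with the automatic independence of such multiplicities from the chosen filtration. The dual statements for (proper) costandard modules and the injective hull of $\fL(\lambda)$ follow from the symmetric argument exchanging $B^+\leftrightarrow B^-$.

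\emph{Main obstacle.} The crux is the $\Ext^1$-vanishing together with the transfer from the regular module to each $\fP(\lambda)$: one must carefully identify the extremal graded pieces of $\fM(\lambda)$ and $\onabla(\mu)$ as honest projective (resp.\ injective) $T$-modules, not merely as simples, and this is precisely where the splitness hypothesis $\End_T(\lambda)=\ku$ intervenes to enforce the multiplicity-one behavior needed for the homological machinery to apply.
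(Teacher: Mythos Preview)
The paper does not supply its own proof of this theorem: it is stated with attribution to Holmes--Nakano \cite{MR1136900} and Bellamy--Thiel \cite{Bellamy-Thiel-1} and closed with a bare \qed. There is therefore nothing in the paper to compare your argument against; the result is being quoted, not reproved.

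That said, your sketch is broadly in line with how these references actually proceed. The graded argument for (i) and (ii) --- using that $\ofM(\lambda)$ is generated in degree~$0$ by the simple $T$-module $\lambda$, so every proper graded submodule lies in strictly negative degrees --- is exactly the mechanism in \cite{Bellamy-Thiel-1}. The construction of a standard filtration on the regular module via the filtration of $B^+$ by $T\cdot A^+_{\geq n}$ and exactness of $A\otimes_{B^+}(-)$ is likewise standard. Two points deserve more care than you give them. First, ``graded simple $\Rightarrow$ simple'' is true here but is not a tautology; it uses that $A$ is finite-dimensional (so that Gordon--Green applies and the graded simples coincide with graded lifts of the ungraded simples). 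Second, your $\Ext^1$-vanishing argument is too quick: merely observing that $\fM(\lambda)$ sits in non-positive degrees and $\onabla(\mu)$ in non-negative degrees does not by itself split an ungraded extension, and the relevant homological input in the stratified setting is $\Ext^1_A(\fM(\lambda),\nabla(\mu))=0$ (with the costandard, obtained by inducing the \emph{injective} hull of $\mu$), from which the passage to direct summands and Brauer reciprocity follow via $\dim\Hom_A(\fP(\lambda),\onabla(\mu))$. Your identification of where splitness enters is slightly off: it is primarily used to make the multiplicity formulas $[\,\cdot:\fL(\lambda)\,]=\dim\Hom_A(\fP(\lambda),\,\cdot\,)$ hold without correction factors, rather than for the filtration transfer itself.
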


Thus, the category of modules over a finite-dimensional algebra with triangular decomposition has the flavor of a {\it highest weight category} \cite[Definition 3.1]{CPS}. However, if $A$ is a non-semisimple Hopf algebra\footnote{and in other important examples, cf. \cite[\S1]{Bellamy-Thiel-1}.}, ${}_{A}\cM$ is not a highest weight category. In fact, $A$ has infinite global dimension (because it is symmetric and non-semisimple) which contradicts \cite[Theorem 4.3 (a)]{PS}.

We also see in the examples ({\it e.g.} \cite[\S1]{proj-bgg}) that the axiom of highest weight categories which fails is the existence of a partial order $>$ in $\Lambda$ such that: if $\fM(\mu)$ occurs in a standard filtration of $\fP(\lambda)$, then $\mu>\lambda$; and $\fM(\lambda)$ occurs precisely once. However, Bellamy-Thiel~\cite{Bellamy-Thiel-1} realized that this trouble can be fixed by using the grading; that is working inside of the category of graded modules. 

We denote by $\fN[i]$ the shift of $\fN\in{}_A\cG$ (or ${}_T\cG$) by $i\in\Z$, that is
\begin{align}\label{eq:gr shift}
(\fN[i])_n=\fN_{n-i}
\end{align}
for all $n\in\Z$. Shifting commutes with taking projective covers and injective hulls \cite{MR659212}. Also, it is compatible with the (proper) standard modules and the (proper) costandard modules. In particular,
\begin{align*}
\ofM(\lambda[i])=\ofM(\lambda)[i],\quad\fM(\lambda[i])=\fM(\lambda)[i]\quad\mbox{and}\quad\fL(\lambda[i])=\fL(\lambda)[i]
\end{align*}
for all $\lambda\in\Lambda$ and $i\in\Z$.

Since $T$ is concentrated in degree $0$, the simple objects in ${}_T\cG$ are parametrized by $\Lambda\times\Z$. The simple object corresponding to $(\lambda,i)$ is just $\lambda[i]$, the simple $T$-module $\lambda$ concentrated in degree $i$. We can then define a partial order on $\Lambda\times\Z$ by letting $\lambda[i]<\mu[j]$ if and only if $i<j$. 

\begin{theorem}[\mbox{\cite[Theorem 1.1]{Bellamy-Thiel-1}}]\label{teo:highest weight dim fin}
If $T$ is semisimple, then ${}_A\cG$ is a highest weight category whose set of weights is $\Lambda\times\Z$ and the standard modules are $\fM(\lambda)[d]$ for all 
$\lambda[d]\in\Lambda\times\Z$. In particular, the properties (i)--(iv) above hold in the category ${}_A\cG$ for all $\lambda[d]\in\Lambda\times\Z$. \qed
\end{theorem}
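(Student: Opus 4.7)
The plan is to transfer each ingredient of Theorem~\ref{prop:general facts} to the graded setting, using the $\Z$-grading on $A$ to supply the partial order required for a highest weight category in the sense of \cite[Definition 3.1]{CPS}. Since $A$ is finite-dimensional and graded, \cite{MR659212} guarantees that every simple and every indecomposable projective $A$-module admits a grading, unique up to shift. I would normalize $\fL(\lambda)$ so that its highest weight copy of $\lambda$ lies in degree $0$; then the simple objects of ${}_A\cG$ are precisely the $\fL(\lambda)[i]$ for $(\lambda,i)\in\Lambda\times\Z$, pairwise non-isomorphic, and the projective cover of $\fL(\lambda)[d]$ in ${}_A\cG$ is $\fP(\lambda)[d]$.

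Next I would analyze the grading on the standard modules. Since $T$ is semisimple, $\fM(\lambda)=\ofM(\lambda)$ by \eqref{eq:standard modules semisimple case}. Placing $\lambda$ in degree $0$ inside $\Inf^{B^+}_T(\lambda)$, the triangular decomposition together with (td1)--(td2) yields a graded isomorphism $\fM(\lambda)\simeq A^-\otimes\lambda$; consequently $\fM(\lambda)$ is concentrated in non-positive degrees with $\fM(\lambda)_0=\lambda$. Because $\lambda$ generates $\fM(\lambda)$, any proper graded submodule sits strictly below degree $0$. Thus the graded head of $\fM(\lambda)$ is $\fL(\lambda)$ in degree $0$, and every other graded composition factor is of the form $\fL(\mu)[i]$ with $i<0$. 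Shifting by $d\in\Z$, every composition factor of $\fM(\lambda)[d]$ other than its head $\fL(\lambda)[d]$ is strictly smaller than $\lambda[d]$ in the order declared in the statement.

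Dually, $\onabla(\mu)$ is concentrated in non-negative degrees with socle $\fL(\mu)$ in degree $0$. Applying Brauer reciprocity from Theorem~\ref{prop:general facts}(iv) inside ${}_A\cG$ yields
\[
[\fP(\lambda)[d]:\fM(\mu)[i]] \;=\; [\onabla(\mu)[i]:\fL(\lambda)[d]].
\]
The right-hand side vanishes unless $d\geq i$ and equals $1$ precisely when $(\lambda,d)=(\mu,i)$. Hence in any graded standard filtration of $\fP(\lambda)[d]$, the section $\fM(\lambda)[d]$ occurs exactly once and every other section $\fM(\mu)[i]$ satisfies $\mu[i]>\lambda[d]$.

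It then remains to check the axioms of \cite[Definition 3.1]{CPS}: the poset $\Lambda\times\Z$ with $\lambda[i]<\mu[j]\Leftrightarrow i<j$ is interval-finite because $\Lambda$ is finite, and the parametrizations, composition structures and filtrations described above have exactly the required shape. I expect the main obstacle to be the passage from the ungraded standard filtration supplied by Theorem~\ref{prop:general facts}(iii) to an honestly graded filtration of $\fP(\lambda)[d]$ in ${}_A\cG$. The cleanest route is to redo, inside ${}_A\cG$, the inductive argument of~\cite{MR1136900,Bellamy-Thiel-1}: at each step choose a graded surjection from some $\fM(\mu)[i]$ onto a graded simple quotient of the remaining module and show, via the Ext-vanishing $\Ext^1_{{}_A\cG}(\fM(\lambda)[d],\onabla(\mu)[i])=0$ for $\mu[i]\not>\lambda[d]$ except in the trivial case, that the kernel still admits a standard filtration. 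The multiplicities are then forced to coincide with the Brauer reciprocity values computed above, completing the verification.
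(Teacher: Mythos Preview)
The paper does not give its own proof of this theorem: the statement is quoted verbatim from \cite[Theorem~1.1]{Bellamy-Thiel-1} and closed with a \qed, so there is no argument in the text to compare your proposal against. Your outline is essentially the Bellamy--Thiel strategy---grade the simples and projectives via \cite{MR659212}, read off from the triangular decomposition that $\fM(\lambda)$ sits in non-positive degrees with head $\fL(\lambda)$ in degree~$0$, and then use Brauer reciprocity to control the sections of a graded standard filtration of $\fP(\lambda)[d]$---and it is the right shape.

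There is, however, an internal inconsistency in your treatment of the costandard side. You assert that $\onabla(\mu)$ is concentrated in non-negative degrees and deduce that $[\onabla(\mu)[i]:\fL(\lambda)[d]]\neq 0$ forces $d\geq i$; but from $d\geq i$ the order gives $\mu[i]\leq\lambda[d]$, which is the \emph{opposite} of the conclusion ``every other section $\fM(\mu)[i]$ satisfies $\mu[i]>\lambda[d]$'' that you then draw. In the convention that makes the CPS axioms hold for the order $\lambda[i]<\mu[j]\Leftrightarrow i<j$, the proper costandard $\onabla(\mu)$ has the same graded composition factors as $\ofM(\mu)$ (it is a twisted dual), hence lives in non-positive degrees with socle $\fL(\mu)$ in degree~$0$; the correct inequality is $i\geq d$, and then your desired conclusion $\mu[i]>\lambda[d]$ for the non-top sections does follow. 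Once this sign is fixed, the rest of your sketch (interval-finiteness of $\Lambda\times\Z$, and rerunning the Holmes--Nakano/Bellamy--Thiel induction inside ${}_A\cG$ to produce an honestly graded standard filtration) goes through.
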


We would like to stress that the simple modules are characterized as follows.

\begin{cor}\label{cor:highest weight of L}
Let $\lambda\in\Lambda$ and $i\in\Z$. Then $\fL(\lambda)[i]$ is the unique simple highest weight module with highest weight $\lambda[i]$. 
\qed
\end{cor}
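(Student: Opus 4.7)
The plan is to prove existence and uniqueness separately, exploiting the universal property of the proper standard module together with Theorem \ref{prop:general facts}(i) and Theorem \ref{teo:highest weight dim fin}.

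For existence, I would verify that $\fL(\lambda)[i]$ does have $\lambda[i]$ as a highest weight and is simple. This is essentially contained in the paragraph following \eqref{eq:proper standard modules}: the $T$-submodule $\ku\otimes_{B^+}\Inf^{B^+}_T(\lambda)$ of $\ofM(\lambda)$ is concentrated in degree $0$, is isomorphic to $\lambda$, and is annihilated by $(B^+)_{>0}$ (because $B^+$ acts on $\Inf^{B^+}_T(\lambda)$ through the projection $B^+\twoheadrightarrow T$). Its image in the head $\fL(\lambda)$ is non-zero, since $\fL(\lambda)$ is generated by it, and remains a highest weight. Shifting by $i$ produces a simple graded $T$-submodule of $\fL(\lambda)[i]$ isomorphic to $\lambda[i]$, killed by $(B^+)_{>0}$, which generates $\fL(\lambda)[i]$ as an $A$-module. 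Simplicity of $\fL(\lambda)[i]$ follows from the simplicity of $\fL(\lambda)$ given by Theorem \ref{prop:general facts}(i).

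For uniqueness, suppose $\fN\in{}_A\cG$ is a simple highest weight module with highest weight $W\cong\lambda[i]$. Since $W$ lives in the single degree $i$, is $T$-stable, and is killed by $(B^+)_{>0}$, the $B^+$-action on $W$ factors through $T$; hence $W\cong\Inf^{B^+}_T(\lambda)[i]$ as graded $B^+$-modules. The inclusion $W\hookrightarrow\fN$ then extends to a graded $A$-module map
\[
\ofM(\lambda)[i]\;=\;A\otimes_{B^+}\Inf^{B^+}_T(\lambda)[i]\;\longrightarrow\;\fN,
\]
which is surjective because $\fN$ is generated by $W$. Since $\fN$ is simple, it must coincide with the graded head of $\ofM(\lambda)[i]$, which equals $\fL(\lambda)[i]$ by Theorem \ref{prop:general facts}(i) together with the fact (recalled before Theorem \ref{teo:highest weight dim fin}) that shifting commutes with taking heads.

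I do not foresee a serious obstacle, since both halves of the argument reduce to unpacking the construction of $\ofM(\lambda)$. The one point that deserves attention is the need to stay in the graded category: one has to observe that $W$ being concentrated in degree $i$ is precisely what makes the map $\Inf^{B^+}_T(\lambda)[i]\to\fN$ grading-preserving, and this in turn relies on the hypothesis $T\subseteq A_0$, which places $\Inf^{B^+}_T(\lambda)$ in degree $0$.
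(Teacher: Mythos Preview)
Your proposal is correct and is precisely the standard unpacking of why the corollary follows from Theorem~\ref{prop:general facts} and the remarks preceding it; the paper itself gives no proof beyond the \qed, treating the statement as an immediate consequence of those results, and your argument is the natural expansion of that.
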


\begin{cor}\label{cor:lowest weight of L}
Let $\lambda\in\Lambda$ and $i\in\Z$. Then there exist $\overline{\lambda}\in\Lambda$ and $l_\lambda\in\Z_{\leq0}$ such that $\overline{\lambda}[l_\lambda]$ is a lowest weight of $\fL(\lambda)$. Moreover, $\fL(\lambda)[i]$ is the unique simple lowest weight module with lowest weight $\overline{\lambda}[l_{\lambda}+i]$. 
\end{cor}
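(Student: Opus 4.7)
The plan is to exhibit a lowest weight of $\fL(\lambda)$ directly from the grading, and then to deduce the uniqueness clause by applying Corollary~\ref{cor:highest weight of L} to $A$ with the grading and the Borels reversed.

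First I would show that $\fL(\lambda)$ sits in non-positive degrees. The triangular isomorphism \eqref{eq:triangular decomposition}, together with (td3) giving $B^+=TA^+$, yields a right $B^+$-linear identification $A^-\ot B^+\cong A$. Hence
\begin{align*}
\ofM(\lambda)=A\ot_{B^+}\Inf^{B^+}_T(\lambda)\cong A^-\ot \lambda
\end{align*}
as graded vector spaces, which is concentrated in degrees $\leq 0$ by (td1); the same holds for its graded quotient $\fL(\lambda)$. Let $l_\lambda\leq 0$ be the smallest integer with $\fL(\lambda)_{l_\lambda}\neq 0$, and pick any simple $T$-submodule $\overline{\lambda}\subseteq\fL(\lambda)_{l_\lambda}$ (such a submodule exists since $T\subseteq A_0$ preserves this graded piece). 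Because $(B^-)_{<0}$ strictly lowers degree, $(B^-)_{<0}\cdot\overline{\lambda}\subseteq\fL(\lambda)_{<l_\lambda}=0$, so $\overline{\lambda}[l_\lambda]$ is a lowest weight of $\fL(\lambda)$; by simplicity $\fL(\lambda)=A\cdot\overline{\lambda}$, so $\fL(\lambda)$ is itself a simple lowest weight module.

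For the uniqueness clause I would exploit symmetry: endow $A$ with the reversed grading $A^{op}_n:=A_{-n}$ and declare its positive and negative parts to be $A^-$ and $A^+$ respectively. One checks at once that (td1)--(td3) still hold (with $T$ unchanged), that the new positive Borel is $B^-$, and that the tautological degree-reversal gives an equivalence ${}_A\cG\simeq{}_{A^{op}}\cG$ intertwining shifts and sending ``lowest weight for $A$'' to ``highest weight for $A^{op}$''. Corollary~\ref{cor:highest weight of L} applied to $A^{op}$ therefore says: for each $(\mu,j)\in\Lambda\times\Z$ there is, up to graded isomorphism, a unique simple (graded) lowest weight $A$-module with lowest weight $\mu[j]$. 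Taking $(\mu,j)=(\overline{\lambda},l_\lambda+i)$ and recalling that $\fL(\lambda)[i]$ is such a module by the previous paragraph, the claim follows.

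I expect the main technical nuisance to be the bookkeeping in the final step, namely verifying that all the relevant constructions (Borel subalgebras, proper standard modules, heads, graded shifts) transport faithfully under the grading-reversal/Borel-swap involution. Once that is dispatched the corollary becomes formally dual to Corollary~\ref{cor:highest weight of L}.
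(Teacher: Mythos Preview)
Your proposal is correct and follows essentially the same route as the paper: reverse the grading on $A$ to obtain an algebra $A'$ with $(A')^{\pm}=A^{\mp}$, observe that lowest weights for $A$ become highest weights for $A'$, and then invoke Corollary~\ref{cor:highest weight of L} for $A'$. The only cosmetic difference is that you first argue existence of $\overline{\lambda}[l_\lambda]$ directly from the grading before appealing to the duality for uniqueness, whereas the paper extracts both existence and uniqueness in one stroke from Corollary~\ref{cor:highest weight of L} applied to the simple $A'$-module $\fL(\lambda)'$.
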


\begin{proof}
If $\fN$ is a graded vector space, we write $\fN'$ for the same vector space, but with the reversed grading $(\fN')_n=\fN_{-n}$ for all $n\in\Z$. Then $A'$ admits a triangular decomposition with subalgebras $T$ and $(A')^{\pm}=A^{\mp}$, and $\fL(\lambda)'$ is a simple graded $A'$-module. By the above corollary, there exist $\overline{\lambda}\in\Lambda$ and $l_\lambda\in\Z_{\leq0}$ such that $\fL(\lambda)'$ is the unique simple highest weight $A'$-module with highest weight $\overline{\lambda}[l_\lambda]$. With respect to the grading of $A$, $\overline{\lambda}$ is a lowest weight and hence the corollary follows.
\end{proof}

An object in a highest weight category is {\it tilting} if it admits both standard and costandard filtrations.

\begin{theorem}[\mbox{\cite[Theorem 5.1]{Bellamy-Thiel-1}}]
If $T$ is semisimple, then the tilting objects in ${}_A\cG$ are precisely the projective-injective objects. \qed
\end{theorem}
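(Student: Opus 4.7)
The plan is to prove the two inclusions separately. For the easy direction, projective-injective implies tilting, I would use that in any highest weight category every projective object admits a standard filtration---in our graded setting this follows from Theorems~\ref{prop:general facts}(iii) and~\ref{teo:highest weight dim fin}---and dually every injective object admits a costandard filtration. Hence an object that is simultaneously projective and injective carries both filtrations, and is tilting by definition.

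For the converse, tilting implies projective-injective, I would exploit the Frobenius structure of $A$: the Hopf algebras to which the framework is ultimately applied are symmetric, and more generally the Bellamy--Thiel setting supplies enough self-injectivity that in ${}_A\cG$ every indecomposable projective agrees with an indecomposable injective, possibly up to a Nakayama permutation and a grade shift. Consequently each $\fP(\lambda)[i]$ with $(\lambda,i)\in\Lambda\times\Z$ is projective-injective and, by the first direction, an indecomposable tilting object. Since these modules have pairwise distinct heads $\fL(\lambda)[i]$, they yield a family of $|\Lambda\times\Z|$ pairwise non-isomorphic indecomposable tilting objects.

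By Ringel's classification, the isomorphism classes of indecomposable tilting objects in a highest weight category are in bijection with the weight set, here $\Lambda\times\Z$. A cardinality count then forces the family $\{\fP(\lambda)[i]\}_{(\lambda,i)\in\Lambda\times\Z}$ to exhaust all indecomposable tiltings; in other words every indecomposable tilting is a shifted projective cover of a simple. Krull--Schmidt upgrades this to arbitrary tilting objects, giving that each tilting in ${}_A\cG$ is projective-injective.

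The main obstacle I anticipate is justifying the self-injectivity of $A$: this does not follow from the triangular decomposition axioms alone, and must either be hypothesised or extracted from the extra structure present in the applications of interest. A more intrinsic route avoiding this would be a Ringel-duality style computation directly identifying the indecomposable tilting with prescribed highest weight $\lambda[i]$ as a specific $\fP(\mu)[j]$, with $(\mu,j)$ being read off the top of the standard filtration of $\fP(\lambda)[i]$ together with its dual costandard filtration.
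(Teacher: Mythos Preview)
The paper does not supply its own proof of this statement: it is quoted verbatim from \cite[Theorem~5.1]{Bellamy-Thiel-1} and closed with a \qed, so there is no argument in the present paper to compare your proposal against.

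On the substance of your sketch, there are two genuine gaps. First, as you yourself flag, self-injectivity of $A$ is not among the hypotheses (finite-dimensional, triangular decomposition, $T$ semisimple), and appealing to ``the Hopf algebras to which the framework is ultimately applied are symmetric'' would establish only a special case, not the theorem as stated for arbitrary such $A$. In \cite{Bellamy-Thiel-1} a Frobenius-type property for $A$ is indeed proved under these hypotheses, but that is a nontrivial ingredient of their argument, not something you may simply assume.

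Second, the cardinality count does not go through as written: the weight poset $\Lambda\times\Z$ is countably infinite, so producing a family of pairwise non-isomorphic indecomposable tilting objects indexed by $\Lambda\times\Z$ does not force it to coincide with Ringel's family. What you actually need is that the map sending $\fP(\lambda)[i]$ to its highest weight in the standard-filtration sense is a \emph{bijection} of $\Lambda\times\Z$; via compatibility with grading shifts this reduces to a bijectivity statement on the finite set $\Lambda$, which is essentially the Nakayama permutation --- bringing you right back to the Frobenius structure you were hoping to sidestep. Your closing suggestion, to identify each indecomposable tilting directly via Ringel duality, is closer to what is needed, but it too requires the self-injectivity input to get off the ground.
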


\subsubsection{Graded characters}

Recall that the {\it Grothendieck group} $K(\cC)$ of an abelian category $\mathcal{C}$ is the abelian group generated by the isomorphism classes $[X]$ of objects in $\cC$ subject to the relations $[X]=[Y]+[Z]$ if there exists a short exact sequence $Y\rightarrow  X\rightarrow Z$. 

The inclusion $T\rightarrow A$ induces a group homomorphism $\ch:K({}_A\cM)\rightarrow K({}_T\cM)$. This does not necessarily distinguish simple modules, see for instance \cite[\S1.1]{PV2}. Once again, we can fix this issue by working inside the graded category. 

We consider $K({}_A\cG)$ and $K({}_T\cG)$ as $\Z[t,t^{-1}]$-modules where $t$ acts by the shift $[1]$. As $T$ is concentrated in degree $0$, we have that $K({}_T\cG)\simeq K({}_T\cM)[t,t^{-1}]$ and we can define the {\it graded character} $\chgr:K({}_A\cG)\longrightarrow K({}_T\cM)[t,t^{-1}]$  by
\begin{align}\label{eq:chgr}
\chgr \fN=\sum_{i\in\Z}\ch \fN_i\, t^i
\end{align}
for every $\fN\in{}_A\cG$. The following is mainly a consequence of the fact that the simple modules $\fL(\lambda)$ are distinguished by their highest weights.

\begin{prop}[\mbox{\cite[Proposition 3.19]{Bellamy-Thiel-1}}]\label{prop:chgr L are basis}
$\chgr:K({}_A\cG)\longrightarrow K({}_T\cM)[t,t^{-1}]$ is an injective morphism of $\Z[t,t^{-1}]$-modules. \qed
\end{prop}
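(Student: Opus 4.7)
The plan is to verify $\Z[t,t^{-1}]$-linearity first and then prove injectivity by a leading-coefficient argument in $t$.

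Additivity of $\chgr$ follows from exactness of the functors $(-)_i$ on ${}_A\cG$ combined with additivity of $\ch$ on ${}_T\cM$. Compatibility with the $t$-action is a direct computation from \eqref{eq:gr shift}:
\[
\chgr(\fN[i]) = \sum_{n\in\Z}\ch((\fN[i])_n)\,t^n = \sum_{n\in\Z}\ch(\fN_{n-i})\,t^n = t^i\,\chgr(\fN).
\]

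For injectivity, I would first recall that finite-dimensional graded modules have finite length, so the Jordan--H\"older theorem gives that $K({}_A\cG)$ is free abelian on the isomorphism classes of simple objects. By Corollary \ref{cor:highest weight of L} these are precisely the shifts $\{\fL(\lambda)[d] : \lambda\in\Lambda,\ d\in\Z\}$, and they are pairwise non-isomorphic.

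The key structural input is the identification
\[
\fL(\lambda) = \bigoplus_{n\leq 0}\fL(\lambda)_n, \qquad \fL(\lambda)_0 \cong \lambda \ \text{as } T\text{-modules}.
\]
This follows from the triangular decomposition: $\ofM(\lambda)=A\otimes_{B^+}\Inf^{B^+}_T(\lambda)\cong A^-\otimes\lambda$ as graded vector spaces, and conditions (td1)--(td2) force $A^-$ to live in degrees $\leq 0$ with $(A^-)_0=\ku$. Hence $\ofM(\lambda)_0\cong\lambda$ and $\ofM(\lambda)_n=0$ for $n>0$. Any proper graded submodule of $\ofM(\lambda)$ must avoid degree $0$, since otherwise it contains the simple $T$-module $\lambda$, which generates $\ofM(\lambda)$; so the graded radical is concentrated in strictly negative degrees and the quotient $\fL(\lambda)$ inherits the displayed property.

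Given this, the leading-term argument is routine. Write $0\neq x=\sum_{\lambda,d} n_{\lambda,d}\,[\fL(\lambda)[d]]$ with $n_{\lambda,d}\in\Z$, and let $d_0$ be the largest integer with $n_{\lambda,d_0}\neq 0$ for some $\lambda$. Then
\[
\chgr(x)=\sum_{\lambda,d} n_{\lambda,d}\,t^d\,\chgr(\fL(\lambda)),
\]
and, since each $\chgr(\fL(\lambda))$ is supported in non-positive powers of $t$, the coefficient of $t^{d_0}$ equals $\sum_{\lambda} n_{\lambda,d_0}[\lambda]$ in $K({}_T\cM)$; this is nonzero by linear independence of the classes of pairwise non-isomorphic simple $T$-modules. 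Hence $\chgr(x)\neq 0$. The main potential obstacle is the structural fact $\fL(\lambda)_0\cong\lambda$; once it is in place, everything else is bookkeeping with highest powers of $t$.
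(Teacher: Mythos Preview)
Your argument is correct and matches the approach the paper signals: just before the statement the paper remarks that the result ``is mainly a consequence of the fact that the simple modules $\fL(\lambda)$ are distinguished by their highest weights,'' and the proof of Proposition~\ref{prop:chgr fM base para proj} invokes the same leading-term reasoning from \cite[Proposition~3.19]{Bellamy-Thiel-1}. Your derivation of the structural fact $\fL(\lambda)_0\cong\lambda$ with $\fL(\lambda)_n=0$ for $n>0$ from (td1)--(td2) and the highest-weight generation of $\ofM(\lambda)$ is exactly the intended mechanism, and the top-degree extraction in $t$ is the standard way to conclude.
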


From now on, we identify $K({}_A\cG)$ with its image via $\chgr$. The next corollary is a direct consequence of the definition of Grothendieck group. It states that we can read the composition factors of a graded $A$-module from its graded character, see Lemma \ref{le:coeficientes de los pol}.

\begin{cor}\label{cor:Llambda basis}
The set $\{\chgr\fL(\lambda)\mid\lambda\in\Lambda\}$ is a $\Z[t,t^{-1}]$-basis of $K({}_A\cG)$. More explicitly, for every $\fN\in{}_A\cG$ there exist unique polynomials $p_{\fN,\fL(\lambda)}\in\Z[t,t^{-1}]$ such that 
\begin{align*}
\chgr\fN=\sum_{\lambda\in\Lambda} p_{\fN,\fL(\lambda)}\,\chgr\fL(\lambda).
\end{align*}
\qed
\end{cor}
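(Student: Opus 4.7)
The plan is to deduce the statement directly from standard Grothendieck group generalities together with the injectivity of $\chgr$ established in Proposition~\ref{prop:chgr L are basis}.

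First, I would observe that since $A$ is finite-dimensional, every object of ${}_A\cG$ has a finite composition series, so $K({}_A\cG)$ is the free abelian group on the set of isomorphism classes of simple objects. By Theorem~\ref{teo:highest weight dim fin} (or equivalently by combining the classification in Corollary~\ref{cor:highest weight of L} with the grading formalism), the simple objects of ${}_A\cG$ are precisely the shifts $\fL(\lambda)[i]$ with $(\lambda,i)\in\Lambda\times\Z$, and these are pairwise non-isomorphic. Hence $\{[\fL(\lambda)[i]]\mid (\lambda,i)\in\Lambda\times\Z\}$ is a $\Z$-basis of $K({}_A\cG)$.

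Next, I would note that the $\Z[t,t^{-1}]$-action is defined so that $t\cdot[\fL(\lambda)[i]]=[\fL(\lambda)[i+1]]$, and the orbits of this action are parametrized by $\Lambda$. Consequently $K({}_A\cG)$ is free as a $\Z[t,t^{-1}]$-module with basis $\{[\fL(\lambda)]\mid\lambda\in\Lambda\}$. Applying the injective $\Z[t,t^{-1}]$-module morphism $\chgr$ of Proposition~\ref{prop:chgr L are basis} and identifying $K({}_A\cG)$ with its image yields that $\{\chgr\fL(\lambda)\mid\lambda\in\Lambda\}$ is a $\Z[t,t^{-1}]$-basis of $K({}_A\cG)$.

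For the explicit expansion, given $\fN\in{}_A\cG$, pick any composition series with factors $\fL(\lambda_j)[i_j]$; then in $K({}_A\cG)$ we have $[\fN]=\sum_j[\fL(\lambda_j)[i_j]]$, and grouping terms by $\lambda$ defines
\[
p_{\fN,\fL(\lambda)}(t)=\sum_{j\,:\,\lambda_j=\lambda} t^{i_j}\in\Z[t,t^{-1}],
\]
which gives $\chgr\fN=\sum_{\lambda\in\Lambda}p_{\fN,\fL(\lambda)}\,\chgr\fL(\lambda)$; uniqueness of the polynomials follows at once from the basis property. There is no real obstacle here beyond bookkeeping; the only point that needs the earlier machinery is the injectivity of $\chgr$ (which is what makes the identification meaningful and which encodes the non-trivial content that simples are separated by their graded characters).
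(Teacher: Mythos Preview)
Your proof is correct and matches the paper's intent: the paper gives no explicit argument, stating only that the corollary ``is a direct consequence of the definition of Grothendieck group,'' and your write-up is precisely the standard unpacking of that remark (Jordan--H\"older, the classification of graded simples as the $\fL(\lambda)[i]$, freeness over $\Z[t,t^{-1}]$, and then the injectivity of $\chgr$ from Proposition~\ref{prop:chgr L are basis}). The only cosmetic point is that invoking Theorem~\ref{teo:highest weight dim fin} imports the extra hypothesis that $T$ is semisimple, whereas the corollary is placed in the context where $T$ is merely split; your parenthetical alternative via Corollary~\ref{cor:highest weight of L} (together with the fact from \cite{MR659212} that simples over a finite-dimensional graded algebra are gradable) is the cleaner justification here.
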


Let ${}_A\cG_{proj}$ be the category of finite-dimensional graded projective modules over $A$. The {\it split Grothendieck group} $K({}_A\cG_{proj})$ is the group generated by the isomorphism classes 
$[X]$ of graded projective modules subject to the relations $[X]=[Y]+[Z]$ if $X\simeq Y\oplus Z$. 

Again, we may consider $K({}_A\cG_{proj})$ as a $\Z[t,t^{-1}]$-module with the action given by the shift $[1]$. Clearly, we have a group morphism $K({}_A\cG_{proj})\longrightarrow K({}_A\cG)$ and we can consider the graded character on projective modules. The next proposition allows us to identify $K({}_A\cG_{proj})$ with its image under $\chgr$.

\begin{prop}\label{prop:chgr fM base para proj}
If $T$ is semisimple, then $\chgr:K({}_A\cG_{proj})\longrightarrow K({}_T\cM)[t,t^{-1}]$ is an injective morphism of $\Z[t,t^{-1}]$-modules.
Moreover, the set $\{\chgr\fM(\lambda)\mid\lambda\in\Lambda\}$ is a $\Z[t,t^{-1}]$-basis of the image of $\chgr$,  {\it i.e.} for every $\fP\in{}_A\cG_{proj}$ there exist unique polynomials $p_{\fP,\fM(\lambda)}\in\Z[t,t^{-1}]$ such that 
\begin{align*}
\chgr\fP=\sum_{\lambda\in\Lambda} p_{\fP,\fM(\lambda)}\,\chgr\fM(\lambda).
\end{align*}
\end{prop}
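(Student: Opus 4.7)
The plan is to combine three ingredients: the standard filtration of graded projectives (Theorem~\ref{teo:highest weight dim fin}(iii)), the injectivity of $\chgr$ on $K({}_A\cG)$ (Proposition~\ref{prop:chgr L are basis}), and the observation that the change-of-basis matrices relating $\{[\fP(\lambda)]\}$, $\{[\fM(\lambda)]\}$, and $\{[\fL(\lambda)]\}$ are unitriangular in a graded sense, hence invertible after extending scalars to a formal-series completion of $\Z[t,t^{-1}]$.

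First, for the existence of the expansion, Theorem~\ref{teo:highest weight dim fin} exhibits ${}_A\cG$ as a highest weight category with the order $\lambda[i]<\mu[j]\iff i<j$, so each indecomposable graded projective $\fP(\lambda)$ has a standard filtration with $\fM(\lambda)$ at the top and all other subquotients of the form $\fM(\mu)[j]$ with $j>0$. Hence in $K({}_A\cG)$,
\begin{align*}
[\fP(\lambda)] \;=\; \sum_{\mu\in\Lambda} d_{\lambda,\mu}(t)\,[\fM(\mu)],
\end{align*}
with $d_{\lambda,\lambda}(t)\in 1+t\Z_{\geq 0}[t]$ and $d_{\lambda,\mu}(t)\in t\Z_{\geq 0}[t]$ for $\mu\neq\lambda$. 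Writing an arbitrary $\fP\in{}_A\cG_{proj}$ (Krull--Schmidt) as $\bigoplus_{\lambda,i}\fP(\lambda)[i]^{n_{\lambda,i}}$ and setting $q_\lambda(t)=\sum_i n_{\lambda,i}t^i\in\Z_{\geq 0}[t,t^{-1}]$, the $\Z[t,t^{-1}]$-linearity of $\chgr$ then yields the desired expression $\chgr\fP=\sum_\mu p_{\fP,\fM(\mu)}(t)\,\chgr\fM(\mu)$ with $p_{\fP,\fM(\mu)}=\sum_\lambda q_\lambda\, d_{\lambda,\mu}$.

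For uniqueness, I would prove the $\Z[t,t^{-1}]$-linear independence of $\{\chgr\fM(\lambda)\}_{\lambda\in\Lambda}$. Since $\chgr$ is injective on $K({}_A\cG)$ by Proposition~\ref{prop:chgr L are basis}, it suffices to show $\{[\fM(\lambda)]\}_{\lambda\in\Lambda}$ is $\Z[t,t^{-1}]$-linearly independent in $K({}_A\cG)$. The triangular decomposition gives $\fM(\lambda)=A\otimes_{B^+}\Inf^{B^+}_T(\lambda)\cong A^-\otimes_\ku\lambda$ as a graded vector space, and conditions (td1)--(td2) force this to be concentrated in degrees $\le 0$ with $\fM(\lambda)_0\cong\lambda$ as a $T$-module. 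As $T$ is semisimple, this forces $[\fM(\lambda):\fL(\lambda)[0]]=1$ and $[\fM(\lambda):\fL(\mu)[0]]=0$ for $\mu\neq\lambda$, so the matrix $C=(c_{\lambda,\mu}(t))_{\lambda,\mu\in\Lambda}$ determined by $[\fM(\lambda)]=\sum_\mu c_{\lambda,\mu}(t)[\fL(\mu)]$ (cf.\ Corollary~\ref{cor:Llambda basis}) takes the form $C=I+N$ with every entry of $N$ in $t^{-1}\Z_{\geq 0}[t^{-1}]$. Hence $C$ is invertible over the formal-series ring $\Z[[t^{-1}]]$, so $\{[\fM(\lambda)]\}$ is $\Z[[t^{-1}]]$-linearly independent, and \emph{a fortiori} $\Z[t,t^{-1}]$-linearly independent (as $\Z[t,t^{-1}]\subseteq\Z((t^{-1}))$); this in turn gives uniqueness of the $p_{\fP,\fM(\lambda)}$.

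Finally, for injectivity, suppose $\chgr\fP=0$ for some $\fP\in{}_A\cG_{proj}$. The uniqueness just proved forces $p_{\fP,\fM(\mu)}=0$ for all $\mu$, which combined with the first step means $qD=0$, where $D=(d_{\lambda,\mu}(t))_{\lambda,\mu\in\Lambda}$ satisfies $D=I+E$ with every entry of $E$ in $t\Z_{\geq 0}[t]$. Since $D$ is invertible over $\Z[[t]]$ and hence over $\Z[t,t^{-1}]\subseteq\Z((t))$, it follows that $q=0$, all $n_{\lambda,i}=0$, and $\fP=0$. I expect the main technical point to be precisely that neither $C$ nor $D$ is in general invertible over the base ring $\Z[t,t^{-1}]$ itself, so the crux is to replace the desired invertibility by the weaker invertibility after extension of scalars to $\Z[[t^{\pm 1}]]$—a move that is harmless for the linear-independence and injectivity conclusions we actually need.
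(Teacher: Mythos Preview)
Your argument is correct and follows the same outline as the paper: existence from the standard filtration of projectives (Theorem~\ref{teo:highest weight dim fin}), and linear independence from the graded unitriangularity furnished by the highest-weight structure. The paper's execution is slightly more direct: rather than routing through the $\fL$-basis and invoking invertibility over the completions $\Z[[t^{\pm1}]]$, it observes that, since $T$ is semisimple, $\fM(\lambda)=\ofM(\lambda)$ is itself distinguished by its highest weight (namely $(\chgr\fM(\lambda))_0=[\lambda]$ and $(\chgr\fM(\lambda))_n=0$ for $n>0$), so the argument of \cite[Proposition~3.19]{Bellamy-Thiel-1} applies verbatim with $\fM$ in place of $\fL$, and the completion step becomes the elementary ``look at the top degree'' trick. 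One small wording point: injectivity of $\chgr$ on the \emph{split Grothendieck group} concerns arbitrary $\Z[t,t^{-1}]$-combinations of the $[\fP(\lambda)]$, not only genuine modules $\fP\in{}_A\cG_{proj}$; your matrix argument already handles this once $q_\lambda$ is allowed in $\Z[t,t^{-1}]$ rather than $\Z_{\geq0}[t,t^{-1}]$.
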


\begin{proof}
Since projective modules admit standard filtrations in ${}_A\cG$ by Theorem \ref{teo:highest weight dim fin}, the graded characters of standard modules generate $\chgr(K({}_A\cG_{proj}))$. Since $T$ is semisimple, $\fM(\lambda)=\ofM(\lambda)$ for all $\lambda\in\Lambda$ and hence the standard modules are distinguished by their highest weights. Following the arguments in the proof of \cite[Proposition 3.19]{Bellamy-Thiel-1}, one can show that $\chgr$ is injective and the proposition follows.
\end{proof}

\subsection{The infinite-dimensional case}

Following Bonnaf\'e--Rouquier \cite[\S F.2]{bonnafe-rouquier-1}, we can also construct a highest weight category from the category of modules over an algebra $A$ with triangular decomposition without the assumption  that neither $A$ is finite-dimensio\-nal nor $T$ is semi-simple. This is a generalization of the BGG category $\cO$ of a semisimple Lie algebra.

Let $A$, $B^{\pm}$ and $T$ be algebras satisfying (td)--(td3). Let $\Lambda$ be a set of non-isomorphic finite-dimensional simple $T$-modules. We consider now the full subcategory $\cO_T$ of $T$-modules whose objects are finite directs sums of modules in $\Lambda$. This is an abelian subcategory. For all $\lambda\in\Lambda$, we assume that:
\begin{enumerate}[label=($\Lambda$\arabic*)]
\item $\End_T(\lambda)=\ku$,
\item $B^{+}_{n}\ot_T\lambda$ and $B^{-}_{-n}\ot_T\lambda$ belong to $\cO_T$ and
\item\label{item:Lambda noether} the $A$-module $\ofM(\lambda)$ is noetherian ({\it e.g.} for $A^-$ noetherian).
\end{enumerate}

We still consider $\Lambda\times\Z$ partially ordered by the degrees and we say that a $B^+$-module $M$ is {\it locally nilpotent} if 
$M=\bigcup_{n\in\Z_+}\bigl\{m\in M\mid B^+_{>n}m=0\bigr\}$.

\begin{theorem}[\mbox{\cite[Theorem F.2.7 and Proposition F.1.17]{bonnafe-rouquier-1}}]\label{teo:bonnafe rouquier}
Let $\cO^{gr}$ be the category of finitely generated graded $A$-modules, locally nilpotent as $B^+$-modules and whose homogeneous components belong to $\cO_T$. Then 
\begin{enumerate}
 \item $\ofM(\lambda)[d]$ belongs to $\cO^{gr}$ and has a unique simple quotient $\fL(\lambda)[d]$ in $\cO^{gr}$ for all $\lambda[d]\in\Lambda\times\Z$. 
 \item Every simple object of $\cO^{gr}$ is isomorphic to $\fL(\lambda)[d]$ for a unique $\lambda[d]\in\Lambda\times\Z$.
 \item $\cO^{gr}$ is a highest weight category  whose set of weights is $\Lambda\times\Z$ and the standard modules are $\ofM(\lambda)[d]$ for all $\lambda[d]\in\Lambda\times\Z$. 
 \end{enumerate}
\end{theorem}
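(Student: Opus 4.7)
The strategy is to extend the finite-dimensional arguments of Subsection~\ref{subsec:finite case} by using the grading as a replacement for finite-dimensionality, in close analogy with the BGG category $\mathcal{O}$ for semisimple Lie algebras. Throughout, I would exploit the linear isomorphism $\ofM(\lambda)\cong A^-\otimes_\ku\lambda$ coming from the triangular decomposition~\eqref{eq:triangular decomposition}, with $\lambda$ placed in degree $0$, so that $\ofM(\lambda)$ is finitely generated by $1\otimes\lambda$ and concentrated in non-positive degrees.

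For (1), membership of $\ofM(\lambda)$ in $\cO^{gr}$ reduces to three checks: the degree-$n$ component identifies with $B^-_n\otimes_T\lambda$ and thus lies in $\cO_T$ by $(\Lambda 2)$; local $B^+$-nilpotency is automatic because, for a homogeneous $m$ of degree $d\leq 0$, every element of $B^+_{>-d}$ sends $m$ into strictly positive degrees, where $\ofM(\lambda)$ vanishes; and finite generation is immediate. Uniqueness of the simple quotient then follows by a top-degree argument: any proper graded submodule $N$ must have $N_0\subsetneq\lambda$, and since $\lambda$ is a simple $T$-module, $N_0=0$; the sum of all proper submodules therefore still has vanishing degree-$0$ part and is itself proper, giving a unique maximal proper submodule and hence a unique simple quotient $\fL(\lambda)$.

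For (2), let $L\in\cO^{gr}$ be simple and write $L=A\cdot V$ with $V$ a finite-dimensional graded subspace. Local nilpotency on a basis of $V$ yields an $N$ with $B^+_{>N}V=0$, so $B^+V$ is contained in the finitely many components $L_{d+k}$ with $d\in\supp V$ and $0\leq k\leq N$; each such component is finite-dimensional since it lies in $\cO_T$, so $B^+V$ is finite-dimensional. Using $A=A^-B^+$ and the fact that $A^-$ lives in degrees $\leq 0$, we conclude that $L=A^-\cdot(B^+V)$ has a top non-zero degree $d$. Then $L_d\in\cO_T$ contains some simple $T$-submodule isomorphic to $\lambda\in\Lambda$, which is automatically $B^+$-stable since $B^+_{>0}$ shifts degree above $d$. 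The universal property of $\ofM$ converts the inclusion $\lambda\hookrightarrow L$ into a non-zero, hence surjective, map $\ofM(\lambda)[d]\twoheadrightarrow L$, and (1) forces $L\cong\fL(\lambda)[d]$. Uniqueness of the label follows by comparing top degrees and the $T$-module structure there.

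For (3), equip $\Lambda\times\Z$ with the partial order $\lambda[i]<\mu[j]$ iff $i<j$. The composition factor structure of the standard modules is then essentially forced: the head of $\ofM(\lambda)[d]$ is $\fL(\lambda)[d]$ with multiplicity one (by (1)), and any other composition factor $\fL(\mu)[e]$ is a subquotient of the graded submodule supported in degrees $<d$, forcing $e<d$ and hence $\mu[e]<\lambda[d]$. The substantive obstacle is producing projective covers in $\cO^{gr}$ together with standard filtrations, since $\cO^{gr}$ is not a finite-length category. Following the approach of \cite[\S F.2]{bonnafe-rouquier-1}, I would truncate to the subcategories $\cO^{gr}_{\geq n}$ of modules supported in degrees $\geq n$, which are locally finite thanks to $(\Lambda 3)$ and the top-degree bound from (2), and apply an analogue of Theorem~\ref{prop:general facts} inside each truncation to obtain projective covers with standard filtrations. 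These are compatible in $n$, and passing to the inverse limit as $n\to-\infty$ produces the required projectives in $\cO^{gr}$; the most delicate point is to verify that the limit lies in $\cO^{gr}$ (which is where the noetherian hypothesis $(\Lambda 3)$ is essential) and that the standard filtration is preserved under the limit.
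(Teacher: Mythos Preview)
The paper does not give a proof of this theorem: it is a survey, and the statement is quoted verbatim as \cite[Theorem F.2.7 and Proposition F.1.17]{bonnafe-rouquier-1} with no argument supplied. So there is no ``paper's own proof'' to compare against; your proposal is effectively a sketch of the argument from the cited reference.

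That said, your outline is sound and follows the expected route. Parts (1) and (2) are the easy halves and your arguments are correct: the triangular decomposition gives $\ofM(\lambda)\cong A^-\otimes\lambda$ concentrated in non-positive degrees, so local nilpotency and the identification of homogeneous components with objects of $\cO_T$ via $(\Lambda2)$ are immediate, and the unique-maximal-submodule argument via the simple top is standard. For (2), the bounded-above-degree argument from local nilpotency plus finite generation is exactly right. For (3) you correctly identify the substantive content---constructing projective covers with standard filtrations via truncation to $\cO^{gr}_{\geq n}$ and passing to a limit---and you are honest that this is where the work lies. One small caution: the role of $(\Lambda3)$ is not only in making the limit land in $\cO^{gr}$, but already in ensuring that objects of $\cO^{gr}$ (and in particular the truncated categories) have finite length, which is what makes the finite-dimensional machinery of Theorem~\ref{prop:general facts} applicable inside each truncation; you gesture at this but it deserves to be stated more sharply.
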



Similar to the finite-dimensional case, we can consider the graded character of $\fN\in\cO^{gr}$ given by
\begin{align*}
\chgr \fN=\sum_{i\in\Z}\ch \fN_i\, t^i\in K(\cO_T)[[t,t^{-1}]].
\end{align*}
These are now Laurent formal series.

\subsubsection{Remarks}

In the infinite-dimensional case, there may exist simple quotients of the standard modules that do not admit a grading. For instance, let $A$ be a polynomial algebra $T[x,y]$ for some algebra $T$ with a one-dimensional representation $\e:T\rightarrow\ku$. If we set $-\deg(x)=1=\deg(y)$, then $\ku[x]\ot T\ot\ku[y]\rightarrow A$ is a triangular decomposition. Hence $\fL_a=\ofM(\e)/\langle a-x\rangle$ is a one-dimensional simple $A$-module for all $a\in\ku$ but $\fL_a$ does not belong to the category $\cO^{gr}$.

Using this algebra, we can also see that not every simple $A$-module is locally nilpotent over $B^+$. Indeed, $\ku_a=T[y]/\langle a-y\rangle$ is not locally nilpotent over $B^+$ and consequently any non-trivial simple quotient of $\fM(\ku_a)$ neither.

%
%
%

\section{Preliminaries on Hopf algebras}\label{sec:Preliminaries}

We fix here the notation and conventions on Hopf algebras for the rest of the paper. 

Let $H$ be a Hopf algebra. We denote by $\Delta_H$, $\e_H$ and by $\cS_H$ the comultiplication, counit and antipode of $H$, respectively. We omit the subscript when it is clear from the context. We always assume that the antipode is invertible. We use Sweedler notation for the comultiplication and coactions, {\it e.g.} $\Delta(h)=h\_{1}\ot h\_{2}$.

Given a vector space $M$, we set $M^*=\Hom(M,\ku)$ and $\langle f,m\rangle$ denotes the evaluation of $f\in M^*$ in $m\in M$. If $M$ is an $H$-module, then so is $M^*$ via $\langle h\cdot f,m\rangle=\langle f,\cS(h)\cdot m\rangle$ for all $h\in H$, $f\in M^*$, $m\in M$.

\

We recall that the Grothendieck group $K({}_H\cM)$ is indeed a ring with unit $\e$, {\it i.e.} the one-dimensional simple module given by the counit, and multiplication induced by the tensor product. Moreover, if $H$ is a graded algebra, then $K({}_H\cG)$ is a $\Z[t,t^{-1}]$-algebra via the assignment $\e[\pm1]\mapsto t^{\pm1}$. Given two simple $H$-modules $\lambda$ and $\mu$, we denote by $\lambda\mu$ both the $H$-module $\lambda\ot\mu$ as well as its representative in $K({}_H\cM)$ (the meaning will be clear from the context).

\subsection{Nichols algebras}\label{subsec:nichols algebras} 

Our main references here are \cite{MR1714540, MR1913436}. Let $\ydh$ be the category of Yetter-Drinfeld modules over $H$. This a braided category with braiding
$$
V\ot W\rightarrow W\ot V,\quad v\ot w\mapsto v\_{-1}\cdot w\ot v\_{0}
$$
for all objects $V,W\in\ydh$, $v\in V$ and $w\in W$.

Let $V\in\ydh$. The tensor algebra $T(V)$ is a graded braided Hopf algebra with comultiplication induced by $\Delta(v)=v\ot1+1\ot v$ for all 
$v\in V$. The Nichols algebra of $V$ is 
$$
\BV(V)=T(V)/\cJ(V)
$$
where $\cJ(V)$ is the largest Hopf ideal of $T(V)$ generated as an ideal by homogeneous elements of degree $\geq2$. 

Let $B$ be a braided Hopf algebra in $\ydh$, {\it e.g.} $T(V)$ or $\BV(V)$. The bosonization $B\#H$ is the Hopf algebra with underlying vector space $B\ot H$, and multiplication and comultiplication given by
\begin{align*}
(x\# h)(\tilde{x}\#\tilde{h})=&x(h\_{1}\cdot \tilde{x})\#h\_{2}\tilde{h}\quad\mbox{and}\\
\Delta(x\# h)=&x\^{1}\#(x\^{2})\_{-1}h\_{1}\ot (x\^{2})\_{0}\#h\_{2}
\end{align*}
for all $x,\tilde{x}\in B$ and $h,\tilde{h}\in H$; with $\Delta_B(x)=x\^{1}\ot x\^{2}$.

If $\BV(V)$ is finite-dimensional, we denote by $n_{top}$ its maximum degree. We set
\begin{align}\label{eq:lambda V}
\lambda_V=\BV^{n_{top}}(V),
\end{align}
the homogeneous component of degree $n_{top}$ of $\BV(V)$. It is well-known that $\lambda_V$ is one-dimensional. Therefore $\lambda_V$ is a simple $H$-module and $H$-comodule. We can fix a generator $x_{top}\in\lambda_V$. Thus, there exists a 
group-like element $g_{top}\in H$ such that the coaction satisfies $(x_{top})\_{-1}\ot(x_{top})\_{0}=g_{top}\ot x_{top}$.

%

\subsection{Quasitriangular Hopf algebras}\label{subsec:QT} 
Here we refer to \cite{MR1220770}. Let $(H,R)$ be a quasitriangular Hopf algebra, {\it i.e.} $H$ is a Hopf algebra and $R=R\^{1}\ot R\^{2}\in H\ot H$ satisfies that
\begin{align*}
\Delta(R\^{1})\ot R\^{2}&=R\^{1}\ot r\^{1}\ot R\^{2}r\^{2},\quad \e(R\^{1}) R\^{2}=1,\\
\noalign{\smallskip}
R\^{1}\ot \Delta(R\^{2})&=R\^{1}r\^{1}\ot r\^{2}\ot R\^{2},\quad R\^{1}\e(R\^{2})=1\quad\mbox{and}\\
\noalign{\smallskip}
\Delta^{cop}(h)R&=R\Delta(h),\quad\forall h\in H;
\end{align*}
where $r$ denotes another copy of $R$. The element $R$ is called the {\it $R$-matrix} of $H$. It turns out that $R$ is invertible and $R^{-1}=\cS(R\^{1})\ot R\^{2}= R\^{1}\ot\cS^{-1}(R\^{2})$. 

Let $R\_{l}=\{R\^{1}f(R\^{2})\mid f\in H^*\}$ and $R\_{r}=\{f(R\^{1})R\^{2}\mid f\in H^*\}$. Then $R\_{l}$ and $R\_{r}$ are finite-dimensional Hopf subalgebras of $H$ and 
$(R\_{l})^{*cop}\longrightarrow R\_{r}$, $p\mapsto p(R\^{1})R\^{2}$, is an isomorphism of Hopf algebras \cite[Proposition 2]{MR1220770}. Let $\psi:R\_{r}\longrightarrow (R\_{l})^{*cop}$ be 
its inverse map. We define the pairing
\begin{align}\label{eq:form on H}
R\_{l}\ot R\_{r}\longrightarrow\ku,\quad a\ot b\mapsto\langle a,b\rangle=\langle a,\psi(b)\rangle
\end{align}
for all $a\in R\_{l}$ and $b\in R\_{r}$. It satisfies that
\begin{align}\label{eq:prop pairing}
\langle aa',b\rangle=\langle a,b_2\rangle\langle a',b_1\rangle,\quad\langle a,bb'\rangle=\langle a_1,b\rangle\langle a_2,b'\rangle,\quad\langle a,\cS(b)\rangle=\langle\cS^{-1}(a),b\rangle
\end{align}
for all $a,a'\in R\_{l}$ and $b,b'\in R\_{r}$. Moreover, it holds that 
\begin{align}\label{eq:relacion del doble en una QT}
ab=\langle a_1,b_1\rangle\langle a_3,\cS(b_3)\rangle b_2a_2
\end{align}
for all $a\in R\_{l}$ and $b\in R\_{r}$ by \cite[Theorem 2]{MR1220770}.



\

The category of $H$-modules is braided with braiding
\begin{align*}
c=\tau\circ R\quad\mbox{and}\quad c^{-1}=R^{-1}\circ\tau
\end{align*}
where $\tau$ is the usual twist map $\tau(a\ot b)=b\ot a$.

We can endow any $H$-module $M$ with the coaction $\lambda_c(x)=R\^{2}\ot R\^{1}x$,
for all $x\in M$. This gives a braided functor $\eta_c:({}_{H}\cM,c)\rightarrow\ydh$. Instead, if we endow $M$ with the coaction  $\lambda_{c^{-1}}(x)=\cS(R\^{1})\ot R\^{2}x$,
for all $x\in M$, we obtain a braided functor $\eta_{c^{-1}}:({}_{H}\cM,c^{-1})\rightarrow\ydh$.

Nichols algebras can be defined as above for every object in a braided category. Thus, given $M\in{}_{H}\cM$, we can consider the 
Nichols algebras $\BV(M,c)\in ({}_{H}\cM,c)$ and $\BV(M,c^{-1})\in ({}_{H}\cM,c^{-1})$. Moreover, these coincide with the Nichols algebras of $\eta_c(M)$ and $\eta_{c^{-1}}(M)$, 
respectively. This holds because the ideal of relations of a Nichols algebra can be identified with the kernel of the quantum symmetrizer which depends only on the braiding, cf. \cite[Proposition 3.2.12]{MR1714540} or \cite[Proposition 2.11]{MR1913436}.

\subsection{Drinfeld double} Let $K$ be a finite-dimensional Hopf algebra. Following \cite[Theorem 7.1.1]{majid-q}, the Drinfeld double $\cD(K)$ of $K$ is a Hopf algebra which is 
$K\ot K^*$ as a coalgebra. The multiplication and the antipode are given by
\begin{align}\label{eq:DH}
\begin{split}
(a\ot f)(a'\ot f')=&\langle f\_{1}, a'\_{1}\rangle\langle f\_{3},\cS_K(a'\_{3})\rangle (aa'\_{2}\ot f'f\_{2}),\\
\noalign{\smallskip}
\cS(a\ot f)=&(1\ot\cS_{K^*}^{-1}(f))(\cS_K(a)\ot\e),\quad\mbox{for all $a,a'\in K$ and $f,f'\in K^*$}.
\end{split}
\end{align}
It holds that $K$ and $K^{*op}$ are Hopf subalgebras of $\cD(K)$.

The Drinfeld double $\cD(K)$ is quasitriangular with $R$-matrix given by
\begin{align}\label{eq:Rmatrix de DH}
R=\sum_if_i\ot a_i,
\end{align}
where $\{a_i\}_i$ and $\{f_i\}_i$ are dual basis of $K$ and $K^*$, respectively.

There exists an equivalence of braided categories $\ydk\rightarrow({}_{\cD(K)}\cM,c)$ defined as follows. Given $M\in\ydk$, then $M\in{}_{\cD(K)}\cM$ with
\begin{align}\label{eq:ydk a DKcM}
(a\ot f)\cdot m=\langle f,m\_{-1}\rangle\, (a\cdot m\_{0})
\end{align}
for all $a\in K$, $f\in K^*$ and $m\in M$.

\section{A method to construct Hopf algebras with triangular decomposition}\label{sec:a general construction}

The Drinfeld double of bosonizations provides examples of Hopf algebras with triangular decomposition. However, we have to work a bit in order to identify the three subalgebras 
providing such a decomposition. We prefer instead to start at the end, {\it i.e.} we will consider three suitable algebras which lead to a Hopf algebra with triangular decomposition. 
These kind of constructions were performed for instance in \cite{MR1645545,MR3503231,MR2596372} in much more generality than we do it here, see \S\ref{sub:double 
bosonization}.

\

From now on, we fix a quasitriangular Hopf algebra $(H,R)$ and a finite-dimensio\-nal $H$-module $V$. We consider $V$ in $\ydh$ via the braided functor 
$({}_{H}\cM,c)\rightarrow\ydh$, {\it i.e.} the $H$-coaction of $x\in V$ is given by
\begin{align}\label{eq:coaction V}
x\_{-1}\ot x\_{0}=R\^{2}\ot R\^{1}x.
\end{align}

We let $\oV=V^*$ be the Yetter-Drinfeld module over $H$ with action and coaction given by
\begin{align}\label{eq:action Vdual}
\langle h y,x\rangle&=\langle y,\cS(h) x\rangle,\\
\label{eq:coaction Vdual}
y\_{-1}\ot y\_{0}&=\cS(R\^{1})\ot R\^{2}y
\end{align}
for all $y\in\oV$, $x\in V$ and $h\in H$. This corresponds to considering first the dual object of $V$ in ${}_{H}\cM$ and then applying the braided functor 
$({}_{H}\cM,c^{-1})\rightarrow\ydh$. It is immediate that 
\begin{align}\label{eq:relacion entre las coacciones}
\langle x\_{-1}\cdot y,x\_{0}\rangle=\langle y\_{0},\cS^{-1}(y\_{-1})\cdot x\rangle
\end{align}
for all $x\in V$ and $y\in\oV$. 


We then form the bosonization $T(V\oplus\oV)\#H$ and introduce the elements
\begin{align}\label{eq:el de U}
\llbracket y,x\rrbracket:=yx-(y\_{-1}\cdot x)y\_{0}&-\langle y,x\rangle+y\_{-1}x\_{-1}\langle y\_{0},x\_{0}\rangle
\end{align}
for all $y\in\oV$ and $x\in V$.
The  motivation for this formula is Example \S\ref{subsub:double of bosonization} below. 

Let $\bJ$ be the ideal of $T(V\oplus\oV)\#H$ generated by the elements $\llbracket y,x\rrbracket$ for all $x\in V$ and $y\in\oV$. We define
\begin{align}\label{eq:def U}
\bU_{(H,R)}(V)=\faktor{T(V\oplus\oV)\#H}{\bJ}.
\end{align}

\begin{prop}\label{prop:U}
$\bU_{(H,R)}(V)$ is a graded Hopf algebra with
\begin{align*}
\deg V=-1,\quad  \deg H=0,\quad \deg\oV=1
\end{align*}
and $T(V)\ot H \ot T(\oV)\longrightarrow\bU_{(H,R)}(V)$ is a triangular decomposition. Also, the Hopf subalgebra generated by $V$ and $H$, resp. $\oV$ and $H$, is isomorphic to
\begin{align*}
T(V)\#H,\quad\mbox{resp. $T(\oV)\#H$.} 
\end{align*}
\end{prop}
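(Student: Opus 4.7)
The plan is to recognize $\bU_{(H,R)}(V)$ as a double bosonization in the spirit of Majid/Sommerh\"auser (cf. \S\ref{sub:double bosonization} and \cite{MR1645545,MR3503231,MR2596372}); that framework already provides both the Hopf structure and the triangular decomposition. Concretely, the pairing \eqref{eq:form on H} coming from the $R$-matrix extends to a Hopf pairing between $T(V)\#H$ and $T(\oV)\#H$, and the relation $\llbracket y,x\rrbracket = 0$ is designed to be the defining cross-relation of the resulting generalized quantum double; from that viewpoint the proposition is essentially a reformulation.

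Done from scratch, I would proceed in four steps. First, the grading: each summand of $\llbracket y,x \rrbracket$ is homogeneous of degree $0$ --- the pieces $yx$ and $(y\_{-1}\cdot x)y\_{0}$ are visibly so, the scalar $\langle y,x \rangle$ sits in degree $0$, and the last term $y\_{-1} x\_{-2}$ lies in $H$ by the coactions \eqref{eq:coaction V}--\eqref{eq:coaction Vdual}. Hence $\bJ$ is a homogeneous ideal and $\bU_{(H,R)}(V)$ inherits the prescribed grading. Second, the Hopf ideal property: compute $\Delta(\llbracket y,x \rrbracket)$ using the bosonized coproducts on $V$ and $\oV$, the properties \eqref{eq:prop pairing} of the pairing, and the quasitriangularity axiom $\Delta^{cop}(h)R = R\Delta(h)$; the cross-terms should collapse modulo $\bJ\ot A + A\ot\bJ$, where $A = T(V\oplus\oV)\#H$. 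The counit vanishes by inspection, and $\cS(\llbracket y,x \rrbracket)\in\bJ$ follows from the symmetric roles of $V$ and $\oV$ under $\cS$, together with the identity $\langle a,\cS(b)\rangle=\langle\cS^{-1}(a),b\rangle$ in \eqref{eq:prop pairing}.

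Third, surjectivity of the triangular decomposition: read $\llbracket y,x\rrbracket = 0$ as a straightening rule that moves each letter of $\oV$ past each letter of $V$, producing only terms with strictly fewer $\oV V$ inversions plus a scalar correction in $H$; iterating, any monomial in $A$ rewrites as an element of $T(V)\cdot H\cdot T(\oV)$. Fourth, injectivity (the PBW step): construct a faithful representation of $\bU_{(H,R)}(V)$ on the Fock-type module $T(V)\#H\ot T(\oV)$, with $T(V)\#H$ acting by left multiplication on the first two tensor factors and $\oV$ acting by a twisted right multiplication chosen precisely so that $\llbracket y,x\rrbracket$ annihilates $1\ot 1$. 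Then the multiplication map has zero kernel, and the subalgebra claims follow because $T(V)\#H$ and $T(\oV)\#H$ act faithfully on their respective tensor factors.

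The main obstacle is Step 4: injectivity cannot be extracted from relation-manipulation alone and requires building the faithful module, whose construction is the technical heart of every double-bosonization theorem. The Hopf ideal computation in Step 2 is also lengthy; the four summands of $\llbracket y,x\rrbracket$ have asymmetric comultiplications and cancellation only materialises after systematic use of both \eqref{eq:prop pairing} and \eqref{eq:relacion del doble en una QT}. In practice one sidesteps both difficulties by invoking the double bosonization formalism directly, which is precisely why the proposition is presented as a \emph{recipe}.
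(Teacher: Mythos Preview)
Your outline is sound and essentially parallels the paper's argument, but two points deserve comment.

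First, for the Hopf-ideal step the paper is sharper than what you propose: rather than arguing that cross-terms collapse \emph{modulo} $\bJ\ot A + A\ot\bJ$, the paper establishes the exact identity
\[
\Delta\bigl(\llbracket y,x\rrbracket\bigr)=\llbracket y,x\rrbracket\ot 1 + y\_{-1}x\_{-1}\ot\llbracket y\_{0},x\_{0}\rrbracket,
\]
so each generator of $\bJ$ is a twisted primitive and $\bJ$ is visibly a coideal. The paper singles out the ingredients needed for this computation: \eqref{eq:relacion entre las coacciones}, \eqref{eq:relacion del doble en una QT}, and the auxiliary identity $y\,x\_{-1}\ot x\_{0}=y\_{-3}x\_{-1}\cS(y\_{-1})y\_{0}\ot y\_{-2}\cdot x\_{0}$ coming from the $R$-matrix axioms. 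Knowing this closed form makes the calculation much shorter than the ``four asymmetric summands'' picture you sketch.

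Second, on injectivity: you are right that a genuine PBW argument (your Step~4) is needed for the triangular decomposition to be a linear \emph{isomorphism}, and the paper's own proof does not supply one --- it only records the straightening rule $yx\in V\oV\oplus H$, which is surjectivity. So your instinct to fall back on the double-bosonization machinery of \cite{MR1645545,MR3503231} is well placed; the paper is tacitly relying on that same background, as it says in \S\ref{sub:double bosonization}. Your faithful-module sketch is the standard way to fill this gap.
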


\begin{proof}
Clearly, $T(V\oplus\oV)\#H$ is graded and $\bJ$ is an homogeneous ideal. Moreover, it is a coideal because
\begin{align*}
\Delta(\llbracket y,x\rrbracket)=\llbracket y,x\rrbracket\ot1+y\_{-1}x\_{-1}\ot\llbracket y\_{0},x\_{0}\rrbracket
\end{align*}
for all $x\in V$ and $y\in\oV$. This follows from a straightforward computation using \eqref{eq:relacion del doble en una QT} and the identity $yx\_{-1}\ot x\_{0}=y\_{-3}x\_{-1}\cS(y\_{-1})y\_{0}\ot y\_{-2}\cdot x\_{0}$, which is a consequence of the axioms of the $R$-matrix. Therefore $\bU_{(H,R)}(V)$ is a graded Hopf algebra. 

The triangular decomposition holds because \eqref{eq:el de U} ensures that $yx\in V\oV\oplus H$ in $\bU_{(H,R)}(V)$, for all $y\in\oV$ and $x\in V$.

The last part of the statement is clear.
\end{proof}

We are interested in graded quotient Hopf algebras of $\bU_{(H,R)}(V)$ admitting a triangular decomposition with $H$ in the middle. The main example is the following. Let $\BV(V)$ and $\BV(\oV)$ be the Nichols algebras of $V$ and $\oV$, with defining ideals $\cJ(V)$ and $\cJ(\oV)$, respectively. We define
\begin{align}\label{eq:def u}
\kuu_{(H,R)}(V)=\faktor{\bU_{(H,R)}(V)}{\langle\cJ(V),\cJ(\oV)\rangle}.
\end{align}

\begin{prop}\label{prop:u}
$\kuu_{(H,R)}(V)$ is a graded Hopf algebra quotient of $\bU_{(H,R)}(V)$ and $\BV(V)\ot H \ot\BV(\oV)\longrightarrow\kuu_{(H,R)}(V)$ is a triangular decomposition.
Also, the Hopf subalgebra generated by $V$ and $H$, resp. $\oV$ and $H$, is isomorphic to
\begin{align*}
\BV(V)\#H,\quad\mbox{resp. $\BV(\oV)\#H$.} 
\end{align*}
\end{prop}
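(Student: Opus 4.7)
The plan is to first show that the ideal $\langle\cJ(V),\cJ(\oV)\rangle$ is a graded Hopf ideal of $\bU_{(H,R)}(V)$, and then to identify it with the subspace $\cK:=\cJ(V)\ot H\ot T(\oV)+T(V)\ot H\ot\cJ(\oV)$ of $T(V)\ot H\ot T(\oV)\cong\bU_{(H,R)}(V)$; the triangular decomposition and the description of the two distinguished subalgebras will follow immediately.

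For the Hopf ideal property, I would use that $T(V)\#H$ and $T(\oV)\#H$ are Hopf subalgebras of $\bU_{(H,R)}(V)$ by Proposition \ref{prop:U}. Since $\cJ(V)$ is a braided Hopf ideal of $T(V)$ in $\ydh$, the ideal $\cJ(V)\cdot H$ is a Hopf ideal of $T(V)\#H$ with quotient $\BV(V)\#H$. Consequently, for $r\in\cJ(V)$ viewed in $\bU_{(H,R)}(V)$, one has $\Delta(r)\in\langle\cJ(V),\cJ(\oV)\rangle\ot\bU_{(H,R)}(V)+\bU_{(H,R)}(V)\ot\langle\cJ(V),\cJ(\oV)\rangle$ and $\cS(r)\in\langle\cJ(V),\cJ(\oV)\rangle$; the symmetric statements for $\cJ(\oV)$ follow from the Hopf subalgebra $T(\oV)\#H$. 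Since the ideal is homogeneous, $\kuu_{(H,R)}(V)$ is a graded Hopf algebra quotient.

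For the triangular decomposition, the multiplication $\BV(V)\ot H\ot\BV(\oV)\to\kuu_{(H,R)}(V)$ is clearly surjective, and for injectivity it suffices to prove $\langle\cJ(V),\cJ(\oV)\rangle=\cK$ under the linear isomorphism $T(V)\ot H\ot T(\oV)\cong\bU_{(H,R)}(V)$. The inclusion $\cK\subseteq\langle\cJ(V),\cJ(\oV)\rangle$ is immediate. For the reverse, I would show that $\cK$ is a two-sided ideal of $\bU_{(H,R)}(V)$, which by the triangular decomposition reduces to checking closure under left and right multiplication by the generating sets $V$, $H$, and $\oV$. The cases of $V$ and $H$ are direct consequences of the bosonization relations and the $H$-stability of $\cJ(V)$ and $\cJ(\oV)$ as Yetter--Drinfeld submodules.

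The main obstacle is commuting elements of $\oV$ past $\cJ(V)$ (and symmetrically $V$ past $\cJ(\oV)$). I would handle this inductively using the defining relation $\llbracket y,x\rrbracket=0$ of \eqref{eq:el de U}: for $y\in\oV$ and a homogeneous $j\in\cJ(V)$, rewriting $y\cdot j$ in triangular normal form produces a main term $(y_{(-1)}\cdot j)y_{(0)}\in\cJ(V)\cdot\oV\subseteq\cK$ (using that $\cJ(V)$ is $H$-stable) together with lower-degree corrections. These corrections correspond to iterated skew-derivations $\partial_y\colon T(V)\to T(V)$ encoded by the evaluation pairing $\langle y,x\rangle$ appearing in \eqref{eq:el de U}; by the standard characterization of Nichols algebras, $\cJ(V)$ is the intersection of the kernels of all iterated skew-derivations of matching degree, so in particular $\partial_y(\cJ(V))\subseteq\cJ(V)$, and every correction stays in $\cK$. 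Once $\langle\cJ(V),\cJ(\oV)\rangle=\cK$ is established, the subalgebra statement follows from the intersection $T(V)\#H\cap\cK=\cJ(V)\cdot H$ (immediate from the triangular decomposition), yielding $T(V)\#H/\cJ(V)\cdot H\simeq\BV(V)\#H$, and analogously for $\oV$.
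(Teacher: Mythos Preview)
Your approach differs genuinely from the paper's. The paper does not attempt to verify that $\cK$ is a two-sided ideal of $\bU_{(H,R)}(V)$ by commuting $\oV$ past $\cJ(V)$; instead it observes (by a one-line computation from \eqref{eq:coaction V} and \eqref{eq:coaction Vdual}) that the cross-braidings in $\ydh$ satisfy $c_{\oV,V}\circ c_{V,\oV}=\id$, and then invokes Gra\~na's freeness theorem \cite[Theorem~2.2]{MR1779599}: under this hypothesis the multiplication $\BV(V)\ot\BV(\oV)\to\BV(V\oplus\oV)$ is a linear isomorphism, whence $\cJ(V\oplus\oV)=\langle\cJ(V),\cJ(\oV)\rangle$ inside $T(V\oplus\oV)$ and the triangular decomposition of $\kuu_{(H,R)}(V)$ drops out by reversing the order of the two quotients. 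The paper's route is short and packages all the work into a known structural result about Nichols algebras of direct sums; yours is more self-contained but essentially reproves a special case of that result by hand.

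Your argument is plausible, but the key step is under-justified. The correction produced by the relation $\llbracket y,x\rrbracket=0$ is not the scalar $\langle y,x\rangle$ alone: \eqref{eq:el de U} also contains the $H$-valued summand $\langle y_{(-2)},x_{(-3)}\rangle\langle y_{(0)},\cS(x_{(-1)})x_{(0)}\rangle\,y_{(-1)}x_{(-2)}$. Hence when you move $y\in\oV$ past $j\in\cJ^n(V)$ in $\bU_{(H,R)}(V)$, the degree-$(n{-}1)$ remainder lands in $T^{n-1}(V)\cdot H$, and it is not literally $\partial_y(j)$ for the standard braided skew-derivation on $T(V)$. To conclude that it lies in $\cJ^{n-1}(V)\cdot H$ you must argue that this $H$-twisted contraction decomposes as an $H$-linear combination of the standard $\partial_{y'}$ (this does hold, since the extra summand in \eqref{eq:el de U} is built from the $H$-action and the evaluation pairing, both of which preserve $\cJ(V)$); your phrase ``encoded by the evaluation pairing $\langle y,x\rangle$'' elides exactly this point and should be expanded into an explicit reduction.
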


\begin{proof}
We only have to prove the triangular decomposition. A direct computation shows that the composition of the braidings $c_{V,\oV}:V\ot\oV\rightarrow\oV\ot V$ and $c_{\oV,V}:\oV\ot V\rightarrow V\ot\oV$ is the identity. Then, the Nichols algebra $\BV(V\oplus\oV)$ is isomorphic to $\BV(V)\ot\BV(\oV)$ as vector  spaces by \cite[Theorem 2.2]{MR1779599} and hence $\cJ(V\oplus\oV)=\cJ(V)\ot T(\oV)+T(V)\ot \cJ(\oV)=\langle\cJ(V),\cJ(\oV)\rangle$ as ideals in $T(V\oplus\oV)$. Therefore $\langle\cJ(V),\cJ(\oV)\rangle=\cJ(V)\ot H\ot T(\oV)+T(V)\ot H\ot\cJ(\oV)$ as ideals in $T(V\oplus\oV)\#H$. Since the generators of $\bJ$ do not belong to this ideal, we have that
\begin{align*}
\kuu_{(H,R)}(V)\simeq\faktor{\left(\faktor{T(V\oplus\oV)\#H}{\langle\cJ(V),\cJ(\oV)\rangle}\right)}{\bJ}\simeq\BV(V)\ot H\ot\BV(\oV)
\end{align*}
as vector spaces.
\end{proof}

%

\subsection{Examples}

\subsubsection{The Drinfeld double of a bosonization}\label{subsub:double of bosonization}

Let $K$ be a finite-dimensional Hopf algebra and $R$ the $R$-matrix of $\cD(K)$ given by \eqref{eq:Rmatrix de DH}. Let $V$ be a finite-dimensional Yetter-Drinfeld module over $K$ with $\dim\BV(V)<\infty$. We consider $V$ as a $\cD(K)$-module via \eqref{eq:ydk a DKcM}.

\begin{lema}
The Drinfeld double of $\BV(V)\#K$ is isomorphic to $\kuu_{(\cD(K),R)}(V)$.

In particular, $\BV(V)\ot\cD(K)\ot\BV(\oV)\longrightarrow\cD(\BV(V)\#K)$ is a triangular decomposition
and hence $\cD(K)$, $\BV(V)\#\cD(K)$ and $\BV(\oV)\#\cD(K)$ are graded Hopf subalgebras of $\cD(\BV(V)\#K)$.
\end{lema}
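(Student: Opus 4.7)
The plan is to exhibit a Hopf algebra isomorphism $\Phi\colon \kuu_{(\cD(K),R)}(V)\to\cD(B)$ where $B=\BV(V)\#K$, and then to obtain the ``in particular'' clause directly from this isomorphism combined with Proposition \ref{prop:u}. I would define $\Phi$ as a surjection out of the explicit presentation of $\kuu_{(\cD(K),R)}(V)$ as a quotient of $T(V\oplus\oV)\#\cD(K)$, and then invoke the equality
\[
\dim \kuu_{(\cD(K),R)}(V) = \dim\BV(V)\cdot\dim\cD(K)\cdot\dim\BV(\oV) = (\dim B)^2 = \dim\cD(B),
\]
which holds by Proposition \ref{prop:u}, to upgrade surjectivity to isomorphism.

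To define $\Phi$, I would assemble three natural embeddings into $\cD(B)$: (i) the canonical Hopf embedding $B\hookrightarrow\cD(B)$, which already contains $V=V\#1$; (ii) the Hopf embedding $B^{*op}\hookrightarrow\cD(B)$, into which $\oV=V^*$ embeds via $y\mapsto \tilde y$ with $\tilde y(v\#k)=\langle y,v\rangle\e(k)$ (extended by zero on the other homogeneous components of $B$); and (iii) a Hopf algebra embedding $\cD(K)\hookrightarrow\cD(B)$ sending $a\otimes f\in K\otimes K^*$ to $(a\#1)\otimes \hat f$, where $\hat f\in B^*$ is the lift of $f$ given by $\hat f(v\#k)=\e_{\BV(V)}(v)f(k)$. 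Using \eqref{eq:DH} together with the Yetter--Drinfeld structures of $V$ and $\oV$ recorded in \eqref{eq:coaction V}--\eqref{eq:coaction Vdual}, I would verify that the $\cD(K)$-module structures on $V$ and on $\oV$ induced inside $\cD(B)$ from (iii) coincide with those given by \eqref{eq:ydk a DKcM}. The universal properties of the tensor algebra and of the bosonization then produce a graded Hopf algebra map
\[
\Phi_0\colon T(V\oplus\oV)\#\cD(K)\longrightarrow\cD(B).
\]

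It remains to check that $\Phi_0$ factors through $\kuu_{(\cD(K),R)}(V)$, that is, that it kills $\cJ(V)$, $\cJ(\oV)$ and the ideal $\bJ$ generated by the elements $\llbracket y,x\rrbracket$. The relations $\cJ(V)$ vanish because $V\subset B$ generates the Nichols algebra $\BV(V)$. For $\cJ(\oV)$, the subalgebra of $B^{*op}$ generated by $\oV$ is, by graded duality, the graded dual of $\BV(V)$, which is itself a Nichols algebra on the braided vector space $\oV$; hence $\cJ(\oV)$ is also killed. The main obstacle is the relation $\llbracket y,x\rrbracket=0$ in $\cD(B)$ for $x\in V$ and $y\in\oV$---this is exactly what motivates the formula \eqref{eq:el de U}. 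The plan here is to apply the universal commutation relation \eqref{eq:relacion del doble en una QT} to $\tilde y\in R_l=B^{*op}$ and $x\in R_r=B$ inside $\cD(B)$. This requires explicit formulas for $\Delta_B^2(x)$ and $\Delta_{B^{*op}}^2(\tilde y)$; the first is obtained from the primitivity of $x$ in $\BV(V)$ together with the bosonization coproduct, and the second follows by graded duality. The pairing \eqref{eq:form on H} of $\cD(K)$ enters through the two-fold $K$-coactions on $V$ and on $\oV$; reorganizing using \eqref{eq:relacion entre las coacciones} to move pairings across the expression should yield precisely the four summands appearing in \eqref{eq:el de U}. Once $\Phi$ is constructed, surjectivity is immediate because its image contains generators for both $B$ and $B^{*op}$, and the dimension count above promotes it to the desired isomorphism.
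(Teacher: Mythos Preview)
Your approach is essentially the same as the paper's: both identify $V$, $\oV$ and $\cD(K)$ inside $\cD(B)$, verify that the Yetter--Drinfeld structures match those in \eqref{eq:coaction V}--\eqref{eq:coaction Vdual}, and check the defining relation \eqref{eq:el de U} via the commutation rule in the double. The paper organizes this by first proving $(\BV(V)\#K)^{*op}\simeq\BV(\oV)\#K^{*op}$ (your ``graded duality'' step for $\cJ(\oV)$) and then recognizing $\cD(K)$ as the Hopf subalgebra generated by $K$ and $K^{*op}$, whereas you build an explicit surjection from the presentation and finish with a dimension count; these are two packagings of the same verification.
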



\begin{proof}
The isomorphism follows as in \cite[Lemma 7]{PV2}. We sketch the proof and leave the details for the reader. First, we have to check that 
$$(\BV(V)\#K)^{*op}\simeq\BV(\oV)\#K^{*op}$$
similar 
to \cite[Lemma 5]{PV2}. Here, we consider $\oV=V^*$ as the Yetter-Drinfeld module over $K^{*op}$ with action and coaction defined by
\begin{align*}
\langle f\cdot y,x\rangle=\langle f,\cS^{-1}(x\_{-1})\rangle\langle y,x\_{0}\rangle\quad\mbox{and}\quad
\langle y,h\cdot x\rangle=\langle y\_{-1},h\rangle\langle y\_{0},x\rangle
\end{align*}
for all $f\in K^{*op}$, $h\in H$, $x\in V$ and $y\in \oV$. This extends \cite[Definition 4]{PV2} given for $K=\ku G$. In order to prove the isomorphism, one has to consider $y\in\oV$ as an element of $(\BV(V)\#K)^{*op}$ by setting $\langle y,x\#h\rangle=\langle y,x\rangle\langle\varepsilon,x\rangle$
and $\langle y,b\rangle=0$
if $b$ is homogeneous of degree not equals to $1$.

Let us now identify $\BV(\oV)\#K^{*op}$ with a Hopf subalgebra of $\cD(\BV(V)\#K)$. Then, one can check that the Hopf algebra 
generated by $K$ and $K^{*op}$ is isomorphic to $\cD(K)$. Moreover $V$ and $\oV$ are 
invariants under the adjoint action of $\cD(K)$ and they are left ideals, {\it i.e.} left comodules under the comultiplication. More precisely, they are the Yetter-Drinfeld modules over 
$\cD(K)$ defined by \eqref{eq:coaction V}, \eqref{eq:action Vdual} and \eqref{eq:coaction Vdual}. 

To conclude, we claim that the elements of $V$ and $\oV$ satisfy the relation \eqref{eq:el de U}. In fact, if $x\in V$ and $y\in\oV$, it holds that
\begin{align*}
yx=(y\_{-1}\cdot x)y\_{0}+\langle y,x\rangle -\langle y\_{-2},x\_{-3}\rangle\langle y\_{0},\cS(x\_{-1})\cdot x\_{0}\rangle\, x\_{-2}y\_{-1}
\end{align*}
by \eqref{eq:DH}. On the other hand, the commutation rule \eqref{eq:relacion del doble en una QT} implies that
\begin{align*}
y\_{-1}x\_{-1}\langle y\_{0}, x\_{0}\rangle=\langle y\_{-2},x\_{-3}\rangle\langle y\_{0},\cS(x\_{-1})\cdot x\_{0}\rangle\, x\_{-2}y\_{-1}
\end{align*}
where $\langle y\_{-2},x\_{-3}\rangle$ denotes the pairing \eqref{eq:form on H}; compare with \cite[(21)]{PV2}.
\end{proof}

\subsubsection{The small quantum groups}

Let $\mathfrak{g}$ be a finite-dimensional semisimple Lie algebra over $\C$ and $q$ a root of unity of odd order. The small quantum group $\kuu_{q}(\mathfrak{g})$ can be constructed following our recipe. Although this is not the most elegant way of doing it, we describe it here for the sake of completeness.

Let $(c_{ij})_{1\leq i,j\le\theta}$ be the Cartan matrix of $\mathfrak{g}$ and $(d_i)_{1\leq i\le\theta}$ a diagonal matrix such that $(d_ic_{ij})_{1\leq i,j\le\theta}$ is 
symmetric. Let $\Gamma$ be the finite abelian group generated by $g_1, \dots g_\theta$ with $g_i^{\operatorname{ord}(q)}=1$ for all $1\leq i\le\theta$. Let $\chi_1, \dots 
\chi_\theta\in\widehat\Gamma$ with $\chi_{i}(g_j)=q_{ij}=q^{d_ic_{ij}}$ for all $1\leq i,j\le\theta$. 

If $g=g_1^{n_1}\cdots g_\theta^{n_\theta}\in\Gamma$, we denote $\chi_g=\chi_1^{n_1}\cdots \chi_\theta^{n_\theta}\in\widehat\Gamma$. Then $\ku\Gamma$ is quasitriangular with $R$-matrix
$$\oR=\frac{1}{|\Gamma|}\sum_{g,h\in\Gamma}\chi_g(h^{-1})g\ot h,$$
cf. \cite[Example 2.1.6]{majid-q}. This is the image of the $R$-matrix of the Drinfeld double $\cD(\ku\Gamma)$ under the epimorphism 
$\cD(\ku\Gamma)\twoheadrightarrow \ku\Gamma$ given by $\chi_g=g$ for all $g\in\Gamma$. 

Let $V=\ku\{x_1, \dots,x_\theta\}$ be the $\ku\Gamma$-module with action $g\cdot x_i=\chi_i(g)$ for all $1\leq i\le\theta$ and $g\in\Gamma$. If we apply to $V$ and $(\ku\Gamma,\oR)$ our 
construction, we obtain that
\begin{align*}
\kuu_{q}(\mathfrak{g})\simeq\kuu_{(\ku\Gamma,\oR)}(V)\simeq\faktor{\kuu_{(\cD(\ku\Gamma),R)}(V)}{\langle\chi_g-g\mid g\in\Gamma\rangle}
\end{align*}
where $R$ is the $R$-matrix of $\cD(\ku\Gamma)$, recall \eqref{eq:Rmatrix de DH}.

\subsection{More general constructions}\label{sub:double bosonization}

We have restricted ourselves to consider quasitriangular Hopf algebras and Nichols algebras because we are interested in the Drinfeld doubles of their bosonizations, and mainly those where $H$ is not an abelian group. However, this recipe is a particular case of more general constructions as we explain below.

\subsubsection{}
Let $(H,R)$ be a quasitriangular Hopf algebra. The double-bosonization construction of Majid \cite{MR1645545} can be performed for every dually paired braided Hopf algebras $B\in{}_{H}\cM$ and $C\in\cM_{H}$ instead of $T(V)$ and 
$T(\oV)$. Notice that $B$ and $C$ belong to different categories. We prefer in our exposition to consider $T(V)$ and $T(\oV)$ in the same category because this fact brings with it some 
advantages to study modules over the whole algebra $\bU_{(H,R)}(V)$ and their quotients. We will implicitly use this fact in \S\ref{sec:ext de proj-bgg}.

Also, we can then deduce that the action maps
$$
V\ot M\rightarrow M\quad\mbox{and}\quad\oV\ot M\rightarrow M
$$
are morphism of $H$-modules for all $M\in{}_{H}\cM$. This allows us to carry out an algorithm in order to compute submodules, cf. \cite[Remark 18]{PV2}.

\subsubsection{}

In \cite{MR3503231}, Laugwitz analyzes how to endow $T(V)\ot H\ot T(V^*)$ (and its quotients) with a structure of Hopf algebra. In particular, he extends the double-bosonization 
construction to a non-quasitriangular Hopf algebra $H$ \cite[\S3.4]{MR3503231}. The data for his construction are two braided Hopf algebras $B,C\in\ydh$ {\it weakly dually paired} 
\cite[Definition 6]{MR3503231}.

\subsubsection{} There is an important class of Hopf algebras with triangular decomposition which we cannot construct from our recipe. Namely, those with a free abelian group playing the 
role of $T$. These kind of Hopf algebras were studied for instance by Andruskiewitsch--Radford--Schneider\cite{MR2732981} and Heckenberger--Yamane \cite{MR2840165}, as we have mentioned in the introduction. The reader can find a systematic way to construct this sort of Drinfeld doubles in \cite[\S4]{MR2596372}.

\subsection{On the Nichols algebras of \texorpdfstring{$V$}{V}, \texorpdfstring{$\oV$}{oV} and \texorpdfstring{$V^*$}{Vd}}

Let $V^*$ denote the dual object of $V$ in $\ydh$. That is, $V^*=\oV$ as $H$-modules and the coaction of $y\in\oV$ is 
$$
y\_{-1}\ot y\_{0}=\sum_{i\in I}\cS^{-1}(R\^{2})\ot\langle y,R\^{1} x_i\rangle y_i=\sum_{i\in I}\cS^{-1}(R\^{2})\ot\cS^{-1}(R\^{1}) y,
$$ 
cf. \cite[Proposition 2.1.1]{MR1714540}. In the last equality we used \eqref{eq:action Vdual}. Notice that, $V^*$ is not necessarily isomorphic to $\oV$ as an $H$-comodule. For instance, if $H$ is the Drinfeld double of a non-abelian group $G$, then $\cS^{-1}(R\^{2})\in\ku G$ and $R\^{1}\in (\ku G)^*$, hence there is no isomorphism of comodules between these objects.

We now explain the relation between the Nichols algebras $\BV(V)$, $\BV(\oV)$ and $\BV(V^*)$. First, by \cite[Proposition 3.2.30]{MR1714540}, we know that 
\begin{align}\label{eq:iso BVVd and BVV dbop}
\BV(V^*)\simeq\BV(V)^{*bop}\quad\mbox{as braided Hopf algebras in $\ydh$.}
\end{align}
This isomorphism also holds in $({}_{H}\cM,c)$ via the forgetful functor $\ydh\rightarrow({}_{H}\cM,c)$, since it is a braided functor.

Since the comodule structures do not coincide, $\oV$ and $V^*$ are not equivalent as braided vector spaces. Instead, the braiding of $\oV\ot\oV\rightarrow\oV\ot\oV$ in $\ydh$ coincides 
with $c^{-1}:V^*\ot V^*\rightarrow V^*\ot V^*$ in ${}_{H}\cM$ by construction. Thus, we should analyze the Nichols algebra $\BV(\oV,c^{-1})$ in $({}_{H}\cM,c^{-1})$. 

By \cite[Lemma 1.11]{MR2766176}, we know that $\cJ(\oV)=\cJ(V^*)$ and
\begin{align}\label{eq:mult of BVoV}
\BV(\oV)=\BV(V^*)\quad\mbox{as $H$-module algebras.}
\end{align}
Moreover, we also see in the proof of \cite[Lemma 1.11]{MR2766176} that the comultiplication of $\BV(\oV)$ satisfies
\begin{align}\label{eq:comult of BVoV}
\Delta_{\BV(\oV)}=c^{-1}\circ\Delta_{\BV(V^*)},
\end{align}
that is the opposite coalgebra in the braided category $({}_H\cM,c)$. Therefore, by \eqref{eq:iso BVVd and BVV dbop} and \eqref{eq:mult of BVoV}, we have that 
\begin{align}\label{eq:isos para bgg}
\BV^n(\oV)\simeq\BV^n(V)^*\quad\mbox{as $H$-modules for all $n$.}
\end{align}
In particular, if $\dim\BV(V)<\infty$ and $n=n_{top}$, the above isomorphism implies that
\begin{align}\label{eq:dualidad lambda V lambda oV}
\lambda_V\lambda_{\oV}=\e=\lambda_{\oV}\lambda_{V}
\end{align}
in the Grothendieck ring $K({}_H\cM)$, recall \eqref{eq:lambda V}.

%

\section{On the representation theory of \texorpdfstring{$\kuu_{(H,R)}(V)$}{uHR(V)}}\label{sec:ext de proj-bgg}

Throughout this section, $(H,R)$ will be a quasitriangular Hopf algebra and $V$ a finite-dimensional $H$-module.  We can classify the simple $\kuu_{(H,R)}(V)$-modules using the results of \S\ref{sec:a general framework}. Theorems \ref{prop:general 
facts} and \ref{teo:highest weight dim fin} imply the following, see also \S\ref{subsec:on proj} below.

\begin{theorem}
Assume that $H$ and $\BV(V)$ are finite-dimensional and $H$ is split. Let $\Lambda$ be a complete set of non-isomorphic simple $H$-modules and $\fL(\lambda)$ be the head of $\ofM(\lambda)$. Then $\bigl\{\fL(\lambda)[d]\bigr\}_{\Lambda\times\Z}$ is a complete set of non-isomorphic simple graded $\kuu_{(H,R)}(V)$-modules. 

Moreover, if $H$ semisimple, the category of graded $\kuu_{(H,R)}(V)$-module is a highest weight category.
\qed 
\end{theorem}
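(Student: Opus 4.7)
The plan is to verify that $A:=\kuu_{(H,R)}(V)$ is finite-dimensional and carries a triangular decomposition with $A^-=\BV(V)$, $T=H$, $A^+=\BV(\oV)$, after which the statement is a direct consequence of the results of \S\ref{subsec:finite case}. The linear isomorphism $A^-\ot T\ot A^+\to A$ is Proposition \ref{prop:u}. Axiom (td1) is immediate from the grading $\deg V=-1$, $\deg H=0$, $\deg\oV=1$; axiom (td2) follows from the connectivity of Nichols algebras, i.e.\ $\BV^0(W)=\ku$; axiom (td3) is the identification $B^-\simeq\BV(V)\#H$ and $B^+\simeq\BV(\oV)\#H$ furnished by the last part of Proposition \ref{prop:u}, since the smash product relations yield $A^{\pm}\cdot H=H\cdot A^{\pm}$. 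Finite-dimensionality of $A$ follows from the hypotheses together with \eqref{eq:isos para bgg}, which forces $\dim\BV(\oV)<\infty$.

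Since $T=H$ is split, Theorem \ref{prop:general facts} classifies the simple objects of ${}_A\cM$ as $\{\fL(\lambda)\}_{\lambda\in\Lambda}$. To pass to the graded setting, let $\fN$ be any simple object of ${}_A\cG$. Its top nonzero homogeneous component $\fN_d$ is annihilated by $(B^+)_{>0}$ for degree reasons and contains a simple $H$-submodule $\lambda$; the inclusion $\lambda\hookrightarrow\fN_d\subseteq\fN$ then exhibits $\lambda[d]$ as a highest weight of $\fN$. By \eqref{eq:proper standard modules}, $\fN$ is a graded quotient of $\ofM(\lambda)[d]$, and the simplicity of $\fN$ combined with Theorem \ref{prop:general facts}(i) forces $\fN\simeq\fL(\lambda)[d]$. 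Shift-invariance together with Theorem \ref{prop:general facts}(ii) guarantees that distinct pairs $(\lambda,d)\in\Lambda\times\Z$ yield non-isomorphic $\fL(\lambda)[d]$, proving the first assertion.

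For the last sentence, if $H$ is in addition semisimple then so is $T$, and Theorem \ref{teo:highest weight dim fin} applies verbatim to yield the highest weight category structure on ${}_A\cG$ with weight set $\Lambda\times\Z$ and standard modules $\fM(\lambda)[d]$. I do not foresee a genuine obstacle: the theorem is essentially a direct packaging of Proposition \ref{prop:u} with the general framework of Section \ref{sec:a general framework}. The only point requiring any attention is the verification of (td3), which is built into Proposition \ref{prop:u} via the bosonization structure of $\BV(V)\#H$ and $\BV(\oV)\#H$.
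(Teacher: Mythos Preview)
Your proposal is correct and follows exactly the approach the paper intends: the theorem is stated with a bare \qed\ because it is meant as an immediate consequence of Proposition~\ref{prop:u} (supplying the triangular decomposition) together with Theorems~\ref{prop:general facts} and~\ref{teo:highest weight dim fin}. Your write-up simply makes explicit the verification of (td1)--(td3), the finite-dimensionality via \eqref{eq:isos para bgg}, and the passage from the ungraded classification of Theorem~\ref{prop:general facts} to the graded one, which the paper leaves implicit.
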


We would like to stress that the graded character in the Hopf algebra setting is an algebra map. Explicitly, we can restate Proposition \ref{prop:chgr L are basis} as follows.

\begin{prop}
The functor ${}_{\kuu_{(H,R)}(V)}\cG\longrightarrow{}_{H}\cG$ given by restriction of scalars is monoidal and hence  $\chgr:K({}_{\kuu_{(H,R)}(V)}\cG)\longrightarrow K({}_H\cM)[t,t^{-1}]$ is an injective morphism of $\Z[t,t^{-1}]$-algebras.\qed
\end{prop}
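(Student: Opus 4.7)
The plan is threefold: verify that the restriction functor $\For\colon {}_{\kuu_{(H,R)}(V)}\cG \to {}_H\cG$ carries a monoidal structure, deduce multiplicativity of $\chgr$ from that, and then invoke the injectivity already established in Proposition \ref{prop:chgr L are basis}.

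First I would observe that, by Proposition \ref{prop:u}, $\kuu_{(H,R)}(V)$ is a graded Hopf algebra in which $H$ appears as a Hopf subalgebra concentrated in degree $0$. Hence the inclusion $H \hookrightarrow \kuu_{(H,R)}(V)$ is a morphism of graded Hopf algebras (the key point, already built into the construction, is that $\bJ$ is a Hopf ideal as shown in the proof of Proposition \ref{prop:U}, and that $\Delta$ preserves the grading $\deg V = -1$, $\deg H = 0$, $\deg \oV = 1$). Consequently, restriction of scalars along this inclusion is a strong monoidal functor between the corresponding graded module categories: on each side the tensor product of modules is defined using the comultiplication, and the comultiplication on $H$ is the restriction of that on $\kuu_{(H,R)}(V)$. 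Since $\Delta$ is graded, for $\fN,\fN' \in {}_{\kuu_{(H,R)}(V)}\cG$ one has $(\fN \otimes \fN')_k = \bigoplus_{i+j=k} \fN_i \otimes \fN'_j$ also after restriction to $H$.

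Next I would deduce the multiplicativity of $\chgr$ by a direct computation:
\begin{align*}
\chgr(\fN \otimes \fN') &= \sum_k \ch(\fN \otimes \fN')_k\, t^k = \sum_{i,j} \ch(\fN_i \otimes \fN'_j)\, t^{i+j} \\
&= \Bigl(\sum_i \ch\fN_i\, t^i\Bigr)\Bigl(\sum_j \ch\fN'_j\, t^j\Bigr) = \chgr\fN \cdot \chgr\fN',
\end{align*}
where the second equality uses the graded decomposition above together with the fact that $\For$ is monoidal (so the class of $\fN_i \otimes \fN'_j$ in $K({}_H\cM)$ is the product of the classes of $\fN_i$ and $\fN'_j$). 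The unit condition is immediate: the trivial module $\e$ of $\kuu_{(H,R)}(V)$ restricts to the trivial $H$-module and is concentrated in degree $0$, so $\chgr(\e) = 1$. This upgrades $\chgr$ to a $\Z[t,t^{-1}]$-algebra morphism.

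Finally, injectivity is \emph{not} a new assertion: Proposition \ref{prop:chgr L are basis} already gives that $\chgr$ is an injective morphism of $\Z[t,t^{-1}]$-modules (with $T = H$ playing the role of the degree-zero subalgebra, split by the running hypothesis), and a ring map which is injective as a module map is injective as an algebra map. The only real bookkeeping, which I do not expect to be a serious obstacle, is checking that the grading makes $\Delta$ a graded map on \emph{all} of $\kuu_{(H,R)}(V)$; this is immediate on generators of $H$, on $V$, and on $\oV$, and the computation of $\Delta(\llbracket y,x\rrbracket)$ in the proof of Proposition \ref{prop:U} shows that the defining relations are compatible with the grading, so the quotient inherits a graded comultiplication.
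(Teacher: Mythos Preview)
Your proposal is correct and is essentially a careful unpacking of what the paper treats as immediate: the paper gives no proof (the statement ends with \qed), relying on the fact that $H\hookrightarrow\kuu_{(H,R)}(V)$ is a map of graded Hopf algebras so restriction is monoidal, together with the injectivity already recorded in Proposition~\ref{prop:chgr L are basis}. Your explicit verification of multiplicativity and of the graded compatibility of $\Delta$ simply spells out these routine checks.
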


In the infinite-dimensional case we apply Theorem \ref{teo:bonnafe rouquier}.

\begin{theorem}
Let $\Lambda$ be a set of non-isomorphic finite-dimensional simple $H$-modules and $\cO_H$ the full subcategory of $H$-modules whose objects are finite directs sums of modules in 
$\Lambda$. Assume  for all $\lambda\in\Lambda$ that 
\begin{enumerate}[label=($\Lambda$\arabic*)]
\item $\End_H(\lambda)=\ku$.
\item $\BV^n(V)\ot_H\lambda$ and $\BV^n(\oV)\ot_H\lambda$ belong to $\cO_H$.
\item the $\kuu_{(H,R)}(V)$-module $\ofM(\lambda)$ is noetherian ({\it e.g.} for $\BV(V)$ noetherian).
\end{enumerate}
Let $\cO^{gr}$ be the category of finitely generated graded $\kuu_{(H,R)}(V)$-modules, locally nilpotent as $\BV(\oV)$-modules and whose homogeneous components belong to $\cO_H$.

Then all the simple objects (up to isomorphism) in $\cO^{gr}$ are the heads $\fL(\lambda)[d]$ of $\ofM(\lambda)[d]$ for all $\lambda[d]\in\Lambda\times\Z$. Moreover, 
$\cO^{gr}$ is a highest weight category. \qed
\end{theorem}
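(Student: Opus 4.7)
The plan is straightforward: verify that the data $(A, T, A^\pm) = \bigl(\kuu_{(H,R)}(V),\,H,\,\BV(V),\,\BV(\oV)\bigr)$ satisfies every hypothesis of Theorem~\ref{teo:bonnafe rouquier}, and then simply invoke that theorem.

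First I would identify the building blocks. Proposition~\ref{prop:u} already gives the triangular decomposition $\BV(V)\ot H\ot\BV(\oV)\longrightarrow\kuu_{(H,R)}(V)$, and it identifies the Hopf subalgebras $B^\pm = \BV(V)\#H$ and $B^+ = \BV(\oV)\#H$, which fit into axioms (td), (td1), (td2), (td3). With the grading $\deg V = -1$, $\deg \oV = 1$, $\deg H = 0$, we have $A^-\subseteq\oplus_{n\leq0}A_n$ and $A^+\subseteq\oplus_{n\geq0}A_n$, and $H$ is in degree $0$.

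Next, I would translate each of $(\Lambda 1)$--$(\Lambda 3)$ from the hypotheses of Theorem~\ref{teo:bonnafe rouquier} into the hypotheses stated in our theorem. Condition $(\Lambda 1)$, $\End_H(\lambda)=\ku$, is exactly the hypothesis on $\Lambda$. For $(\Lambda 2)$, the key observation is that, as an $H$-module by left multiplication inside the bosonization, $B^+_n = \BV^n(\oV)\#H$; hence
\begin{align*}
B^+_n\ot_H\lambda \;\cong\; \BV^n(\oV)\ot_{\ku}\lambda
\end{align*}
as vector spaces, and the residual $H$-action is the diagonal one $h\cdot(b\ot v) = (h\_{1}\cdot b)\ot(h\_{2}\cdot v)$, which is precisely the tensor product $\BV^n(\oV)\ot\lambda$ in ${}_{H}\cM$. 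The analogous computation gives $B^-_{-n}\ot_H\lambda\cong\BV^n(V)\ot\lambda$. Thus hypothesis (2) of our theorem is exactly $(\Lambda 2)$. Condition $(\Lambda 3)$ is hypothesis (3) verbatim; the parenthetical remark about $\BV(V)$ being noetherian gives a sufficient condition since $\BV(V)=A^-$ in our setup.

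With all hypotheses checked, Theorem~\ref{teo:bonnafe rouquier} applies and yields the three claims: the classification of simple objects in $\cO^{gr}$ as heads $\fL(\lambda)[d]$ of the proper standard modules $\ofM(\lambda)[d]$, and that $\cO^{gr}$ is a highest weight category with weight set $\Lambda\times\Z$ and standard modules $\ofM(\lambda)[d]$. The main obstacle, if any, is the bookkeeping in $(\Lambda 2)$: one must verify that the tensor product of $H$-modules arising from $B^{\pm}_{\pm n}\ot_H\lambda$ really coincides with the diagonal tensor product $\BV^n(\oV)\ot\lambda$ (resp. $\BV^n(V)\ot\lambda$), which hinges on the explicit multiplication formula for the bosonization recalled in \S\ref{subsec:nichols algebras}. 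Everything else is a direct application.
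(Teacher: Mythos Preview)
Your proposal is correct and matches the paper's approach exactly: the paper states this theorem with a bare \qed, immediately after the sentence ``In the infinite-dimensional case we apply Theorem~\ref{teo:bonnafe rouquier},'' so the intended proof is precisely the verification-and-invocation you carry out. Your explicit check that $B^{\pm}_{\pm n}\ot_H\lambda\cong\BV^n(\oV)\ot\lambda$ (resp.\ $\BV^n(V)\ot\lambda$) via the bosonization multiplication is the only nontrivial bookkeeping, and you handle it correctly; note the small typo where you wrote $B^{\pm}=\BV(V)\#H$ but meant $B^{-}$.
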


\subsection{Some problems on the simple modules}

Once we have classified the simple modules, the question regarding their graded characters naturally arises.

\begin{question}
Describe $\chgr\fL(\lambda)$ for all $\lambda\in\Lambda$.
\end{question}

For instance, this was done for the Drinfeld double of the Taft algebra in \cite{MR1743667}; for the Drinfeld double of the Nichols algebra of unidentified diagonal type $\mathfrak{ufo}(7)$ in \cite{doi:10.1080/00927872.2017.1357726}; for the Drinfeld double of the Fomin-Kirillov algebra over $\Sn_3$ in \cite{PV2}; and for the small quantum group at a root of unity of large prime-order, a character formula can be deduced from \cite{MR1272539}. 

In examples we see that the Hilbert series of $\fL(\lambda)$ has symmetric coefficients, {\it i.e.} $\dim\fL(\lambda)_{n}=\dim\fL(\lambda)_{l_{\lambda}-n}$, where we assume that $\fL(\lambda)$ is finite-dimensional with minimum degree $l_{\lambda}$; recall  Corollary \ref{cor:lowest weight of L}. The Hilbert series of any finite-dimensional Nichols algebra also satisfies this symmetry. The following question for rational Cherednik algebras was posed by Thiel in \cite[Question 7.7(a)]{Thiel-CHAMP} and \cite[Problem 6.6]{Thiel-EMS}; nevertheless some restrictions are required because he also gives counter-examples in the first paper, see \cite[Remark 7.8]{Thiel-CHAMP}.

\begin{question}
Are the coefficients of the Hilbert series of $\fL(\lambda)$ symmetric?
\end{question}

Assume that $\BV(V)$ and $H$ are finite-dimensional. If $\lambda\in\Lambda$, we denote by $\overline{\lambda}$ the lowest weight of $\fL(\lambda)$; recall Corollary \ref{cor:lowest weight of L}. Then $\overline{\phantom{\lambda}}:\Lambda\to \Lambda$, $\lambda\mapsto\overline{\lambda}$, is a bijection. For the Drinfeld double of the Fomin-Kirillov algebra over $\Sn_3$, this bijection corresponds to the unique non-trivial braided autoequivalence of the category of $\cD(\ku\Sn_3)$-modules, cf. \cite[(2)]{PV3}, \cite[\S6.6]{LENTNER2017264} and \cite[\S 8.1]{NIKSHYCH2014191}.

\begin{question}
Does the bijection $\overline{\phantom{\lambda}}:\Lambda\to \Lambda$ induce a braided autoequivalence in the category of $H$-modules?
\end{question}

\subsection{On projective modules in the finite-dimensional case}\label{subsec:on proj}

From now on, we assume that $H$ and $\BV(V)$ are finite-dimensional and  $H$ is semisimple.

In this situation Theorem \ref{prop:general facts} gives us information on the projective $\kuu_{(H,R)}(V)$-modules:
\begin{itemize}
 \item Projective modules admit standard filtrations.
 \item Brauer Reciprocity holds.
\end{itemize}
We can also extend to $\kuu_{(H,R)}(V)$ some of the results in 
\cite{proj-bgg} where we have considered $H=\cD(G)$. Indeed, these results 
rely on the fact that the homogeneous component of maximum degree of a finite-dimensional Nichols algebra is one-dimensional, recall \S\ref{subsec:nichols algebras}. We give the main ideas of the proofs. The reader can complete the details following 
\cite{proj-bgg}.

We set
\begin{align*}
A=\kuu_{(H,R)}(V),\quad B^+=\BV(\oV)\#H\quad\mbox{and}\quad B^-=\BV(V)\#H,
\end{align*}
and keep the notation of \S\ref{sec:a general framework}. In particular, $\Lambda$ is a complete set of non-isomorphic simple $H$-modules. As $H$ is assumed to be semisimple, the proper standard modules in \eqref{eq:proper standard modules} and the standard modules in \eqref{eq:standard modules} coincide with the Verma modules considered in 
\cite{proj-bgg}, that is
\begin{align*}
\fM(\lambda)=A\ot_{B^+}\Inf^{B^+}_H(\lambda)
\end{align*}
for all $\lambda\in\Lambda$. 

Meanwhile the costandard modules 
$\nabla(\lambda)$ and the coVerma modules $\fW(\lambda)$ of \cite[(13)]{proj-bgg} are related by 
\begin{align*}
\fW(\lambda):=A\ot_{B^-}\Inf^{B^-}_H(\lambda)\simeq\nabla(\lambda_{\oV}\lambda)
\end{align*}
for all $\lambda\in\Lambda$. 


\begin{lema}[\mbox{\cite[Lemma 1]{proj-bgg}}]\label{le:verma is proj cover}
For all $\lambda\in\Lambda$, $\fM(\lambda)$ is the projective cover of $\Inf^{B^-}_H(\lambda)$ and the injective hull of $\Inf^{B^-}_H(\lambda_V\lambda)$ in ${}_{B^-}\cG$.
\end{lema}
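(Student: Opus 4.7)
The plan is to first rewrite $\fM(\lambda)$ as a $B^-$-module using the triangular decomposition, then read off its head (for the projective cover statement) and its socle (for the injective hull statement).

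By the triangular decomposition \eqref{eq:triangular decomposition}, the multiplication induces an isomorphism $B^-\ot_{H} B^+\simeq A$ of $(B^-,B^+)$-bimodules (the tensor over $H$ collapses because $A\simeq\BV(V)\ot H\ot\BV(\oV)$ as vector spaces). Consequently,
\begin{align*}
\fM(\lambda)=A\ot_{B^+}\Inf^{B^+}_H(\lambda)\simeq B^-\ot_H\lambda
\end{align*}
as a graded $B^-$-module. Since $B^-=\BV(V)\#H$ is free as a right $H$-module and $H$ is semisimple, the induction functor $B^-\ot_H(-)$ sends the projective $H$-module $\lambda$ to a projective $B^-$-module; hence $\fM(\lambda)$ is projective in ${}_{B^-}\cG$.

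To identify the projective cover, I compute the head. The graded Jacobson radical of $B^-$ is generated by the augmentation ideal $\BV(V)^+$ of the Nichols algebra, and the quotient $\fM(\lambda)/\BV(V)^+\cdot\fM(\lambda)$ is the degree-zero component $\ku\ot_H\lambda\simeq\lambda$, i.e.\ $\Inf^{B^-}_H(\lambda)$. Since $\lambda$ is simple, $\fM(\lambda)$ has simple head and is therefore the indecomposable projective cover of $\Inf^{B^-}_H(\lambda)$.

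For the injective hull statement, I use that $B^-$ is a finite-dimensional Hopf algebra, hence Frobenius, so every indecomposable projective in ${}_{B^-}\cG$ is also indecomposable injective. It remains to identify the socle of $\fM(\lambda)$. The finite-dimensional Nichols algebra $\BV(V)$ is a graded Frobenius algebra whose socle is the one-dimensional top-degree component $\lambda_V=\BV^{n_{top}}(V)$, as recalled in \S\ref{subsec:nichols algebras}. Under the vector-space identification $\fM(\lambda)\simeq\BV(V)\ot\lambda$, the socle as a $\BV(V)$-module is thus $\lambda_V\ot\lambda$, sitting in the lowest degree $-n_{top}$. The smash-product formula $(1\#h)(x\#1)=(h\_{1}\cdot x)\#h\_{2}$ shows that the induced $H$-action on $\lambda_V\ot\lambda$ is the tensor-product action, so this socle is isomorphic to $\Inf^{B^-}_H(\lambda_V\lambda)$ as a $B^-$-module (up to the appropriate grading shift). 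This yields that $\fM(\lambda)$ is the injective hull of $\Inf^{B^-}_H(\lambda_V\lambda)$.

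The main obstacle I anticipate is bookkeeping rather than conceptual: the gradings and shifts must be tracked carefully, and one needs to verify that the graded radical of $B^-$ is exactly $\BV(V)^+\cdot B^-$. Both points reduce to well-known properties of finite-dimensional Nichols algebras as graded connected Frobenius algebras, already exploited in \cite{proj-bgg} for $H=\cD(\ku G)$; only the semisimplicity of $H$ and the Frobenius structure of $\BV(V)$ are used, so the argument transfers verbatim to our more general setting.
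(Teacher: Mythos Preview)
Your proposal is correct and follows essentially the same route as the paper's proof: identify $\fM(\lambda)\simeq B^-\ot_H\lambda$, use semisimplicity of $H$ for projectivity, the Frobenius property of $B^-$ for injectivity, and the one-dimensionality of $\lambda_V$ to pin down the socle as $\lambda_V\lambda$. The only cosmetic difference is that the paper argues indecomposability via the simple socle and leaves the head implicit, whereas you compute the head $\lambda$ directly from the graded radical; both are equivalent ways of concluding the projective-cover statement.
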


\begin{proof}
As $B^-$-module, $\fM(\lambda)\simeq B^-\ot_H\lambda$ is an induced module from a semisimple algebra and hence it is projective and injective (because $B^-$ is Frobenius). It is indecomposable because its socle is simple and isomorphic to $\lambda_V\lambda$ (here we use that $\dim\lambda_V=1$).	
\end{proof}

\begin{lema}[\mbox{\cite[(10)]{proj-bgg}}]\label{le:verma dual}
For all $\lambda\in\Lambda$, $\fM(\lambda)^*\simeq\fM((\lambda_V\lambda)^*)$ as $A$-modules.
\end{lema}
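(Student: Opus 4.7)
The plan is to exhibit a highest weight subspace of type $(\lambda_V\lambda)^*$ inside $\fM(\lambda)^*$, and then invoke the universal property of the standard module together with a dimension count. Recall from Lemma \ref{le:verma is proj cover} that $\fM(\lambda)\simeq\BV(V)\otimes\lambda$ as a graded $H$-module; in particular it is concentrated in degrees $[-n_{top},0]$ and its bottom component is $\lambda_V\otimes\lambda\simeq\lambda_V\lambda$, a simple $H$-module of the same dimension as $\lambda$ since $\lambda_V$ is one-dimensional. Consequently $\fM(\lambda)^*$ lives in degrees $[0,n_{top}]$ and its top-degree component $W:=(\fM(\lambda)^*)_{n_{top}}$ is isomorphic as an $H$-module to $(\lambda_V\lambda)^*$.

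Next I would check that $W$ is a highest weight subspace, i.e.\ $\BV(\oV)_{>0}\cdot W=0$. The antipode of $A$ preserves the $\Z$-grading (a direct check on each of the generator spaces $V$, $H$, $\oV$), so $\cS(\BV(\oV)_{>0})\subseteq A_{>0}$; hence for $f\in W$, $y\in\BV(\oV)_{>0}$ and $m\in\fM(\lambda)$, the element $\cS(y)\cdot m$ lies in a degree strictly larger than $-n_{top}$ and is therefore killed by $f$. The universal property of $\fM((\lambda_V\lambda)^*)$ then yields an $A$-linear map $\phi:\fM((\lambda_V\lambda)^*)\to\fM(\lambda)^*$ whose image contains $W$.

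Since $\dim\lambda_V=1$, both modules have dimension $\dim\BV(V)\cdot\dim\lambda$, so $\phi$ is an isomorphism as soon as it is surjective. Surjectivity reduces, degree by degree, to $B^-_{-k}\cdot W=(\fM(\lambda)^*)_{n_{top}-k}$ for each $0\leq k\leq n_{top}$. By a routine dualization this is equivalent to the statement that no nonzero $\psi\in\fM(\lambda)_{-(n_{top}-k)}\simeq\BV^{n_{top}-k}(V)\otimes\lambda$ is annihilated by $B^-_{-k}$. The key input here, which I expect to be the main obstacle to formulate cleanly, is the non-degeneracy of the multiplication pairing $\BV^k(V)\otimes\BV^{n_{top}-k}(V)\to\lambda_V$; this holds because $\BV(V)$ is a graded Frobenius algebra with one-dimensional top. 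Expanding $\psi=\sum_i c_i\otimes v_i$ in a basis of $\lambda$ and applying this non-degeneracy to a nonzero $c_{i_0}$ produces $a\in\BV^k(V)\subset B^-_{-k}$ with $a\cdot\psi\neq 0$, completing the argument.
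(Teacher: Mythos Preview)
Your argument is correct and opens exactly as the paper does: locate $(\lambda_V\lambda)^*$ as the top homogeneous component of $\fM(\lambda)^*$, verify it is a highest weight, obtain $\phi$ from the universal property of the standard module, and note that the two sides have equal dimension. The divergence is only in the last step. The paper proves \emph{injectivity} of $\phi$: by Lemma~\ref{le:verma is proj cover} the $B^-$-socle $S$ of the domain $\fM((\lambda_V\lambda)^*)$ is simple, and one checks $\phi(S)\neq0$, so $\ker\phi$ (a $B^-$-submodule) cannot contain $S$ and must vanish. You instead prove \emph{surjectivity}, showing $B^-\cdot W=\fM(\lambda)^*$ directly from the non-degeneracy of the multiplication pairing $\BV^k(V)\otimes\BV^{n_{top}-k}(V)\to\lambda_V$. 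These are dual arguments resting on the same structural input, the graded Frobenius property of $\BV(V)$; the paper's route is a line shorter because Lemma~\ref{le:verma is proj cover} has already packaged that input, whereas yours is more self-contained. One small point to make explicit in your dualization step: you are using that $\cS$ restricts to a degree-preserving bijection of $B^-$ onto itself (true since $B^-=\BV(V)\#H$ is a finite-dimensional graded Hopf subalgebra), so that annihilation by $\cS(B^-_{-k})$ is the same as annihilation by $B^-_{-k}$.
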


\begin{proof}
 Using the grading, we see that $(\BV^{n_{top}}(V)\ot\lambda)^*\simeq(\lambda_V\ot\lambda)^*$ is a highest weight of $\fM(\lambda)^*$.
Then there is a morphism $f:\fM((\lambda_V\lambda)^*)\longrightarrow\fM(\lambda)^*$. If  $S$ denotes the socle of $\fM((\lambda_V\lambda)^*)$ as $B^-$-module, then $S$ is simple and $f(S)\neq0$ (cf. the proof of \cite[(10)]{proj-bgg}). Hence $f$ is injective and therefore an isomorphism because $\dim\fM(\lambda)^*=\dim\fM((\lambda_V\lambda)^*)$.
\end{proof}

Given an $H$-module $S$, we set $\fInd(S):=A\ot_H S$.

\begin{lema}[\mbox{\cite[Lemma 4]{proj-bgg}}]\label{le: W ot M}
For all $\lambda,\mu\in\Lambda$, $\fW(\lambda)\ot\fM(\mu)\simeq\fInd\bigl(\lambda\mu\bigr)$.
\end{lema}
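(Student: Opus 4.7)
The plan is to realize $\fW(\lambda)\ot\fM(\mu)$ as an iterated induction from $H$, using the Hopf-algebraic projection formula twice together with transitivity of induction.

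By Proposition \ref{prop:u}, $B^-=\BV(V)\#H$ is a Hopf subalgebra of $A=\kuu_{(H,R)}(V)$, so the standard projection formula for a Hopf subalgebra inclusion is available:
\begin{align*}
(A\ot_{B^-}N)\ot M\simeq A\ot_{B^-}(N\ot M|_{B^-}),\qquad N\in{}_{B^-}\cM,\ M\in{}_A\cM,
\end{align*}
given by $a\ot_{B^-}(n\ot m)\mapsto(a\_{1}\ot_{B^-}n)\ot a\_{2}m$, with inverse built from $\cS$. Taking $N=\lambda$ and $M=\fM(\mu)$ produces $\fW(\lambda)\ot\fM(\mu)\simeq A\ot_{B^-}(\lambda\ot\fM(\mu)|_{B^-})$.

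The core technical step is to identify $\fM(\mu)|_{B^-}\simeq B^-\ot_H\mu$ as $B^-$-modules. The triangular decomposition exhibits $A$ as $\BV(V)\ot B^+$ in the category of right $B^+$-modules via multiplication, so $\fM(\mu)=A\ot_{B^+}\mu\simeq\BV(V)\ot\mu$ as a vector space. A direct verification, using the bosonization identity $hv=(h\_{1}\cdot v)h\_{2}$ valid inside $B^-$, shows that the restricted $B^-$-action under this identification matches the left regular action on $B^-\ot_H\mu\simeq\BV(V)\ot\mu$: elements of $\BV(V)$ act by left multiplication on the first factor, and elements of $H$ act diagonally via the coproduct.

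Once this is in place, the projection formula applied to the Hopf subalgebra inclusion $H\subset B^-$ yields $\lambda\ot(B^-\ot_H\mu)\simeq B^-\ot_H(\lambda\ot\mu)$, and transitivity of induction gives
\begin{align*}
A\ot_{B^-}\bigl(B^-\ot_H(\lambda\ot\mu)\bigr)=A\ot_H(\lambda\ot\mu)=\fInd(\lambda\mu),
\end{align*}
which closes the chain of isomorphisms. The main obstacle is the identification $\fM(\mu)|_{B^-}\simeq B^-\ot_H\mu$: it ultimately rests on the commutation relations $\llbracket y,x\rrbracket=0$, which allow $\BV(\oV)\cdot\BV(V)$ to be rewritten inside $\BV(V)\cdot H\cdot\BV(\oV)$ and thereby make $A$ a \emph{free} right $B^+$-module on any basis of $\BV(V)$. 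With this vector-space description established, tracing the $B^-$-action across the isomorphism is straightforward bookkeeping.
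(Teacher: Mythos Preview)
Your argument is correct and takes a genuinely different route from the paper. The paper produces an explicit $A$-module map $f:\fInd(\lambda\mu)\rightarrow\fW(\lambda)\ot\fM(\mu)$ from the $H$-submodule $1\ot\lambda\ot1\ot\mu$, then argues that $f$ is injective---this step is where the one-dimensionality of $\lambda_V$ enters---and concludes by a dimension count. Your approach instead chains two applications of the tensor identity (projection formula) for Hopf subalgebra inclusions together with transitivity of induction; the only nontrivial input is the identification $\fM(\mu)|_{B^-}\simeq B^-\ot_H\mu$, which you rederive but which is already recorded in the proof of Lemma~\ref{le:verma is proj cover}. Your route is more structural and, notably, does not invoke $\dim\lambda_V=1$ at all, so it would go through for any graded Hopf algebra with triangular decomposition in which $B^{\pm}$ and $H$ are genuine Hopf subalgebras. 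The paper's approach, on the other hand, writes down the isomorphism concretely on generators, which is sometimes more useful when one later needs to trace elements through it.
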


\begin{proof}
The $H$-submodule $1\ot\lambda\ot1\ot\mu$ of $\fW(\lambda)\ot\fM(\mu)$ induces a morphism of $A$-modules $f:\fInd\bigl(\lambda\mu\bigr)\rightarrow\fW(\lambda)\ot\fM(\mu)$. Using that $\lambda_V$ is 
one-dimensional, we can see that $f$ is injective and then it is an isomorphism.
\end{proof}

Let $\fN\in{}_A\cG$ and $\fP\in{}_A\cG_{proj}$. By Corollary \ref{cor:Llambda basis} and Proposition \ref{prop:chgr fM base para proj}, there exist polynomials
\begin{align*}
p_{\fN,\fL(\lambda)}=\sum_ia_{\fN,\fL(\lambda),i}\,t^i\in\Z[t,t^{-1}]\quad\mbox{and}\quad
p_{\fP,\fM(\lambda)}=\sum_ia_{\fP,\fM(\lambda),i}\,t^i\in\Z[t,t^{-1}]
\end{align*}
such that
\begin{align*}
\chgr\fN=\sum_{\lambda\in\Lambda} p_{\fN,\fL(\lambda)}\,\chgr\fL(\lambda)\quad\mbox{and}\quad
\chgr\fP=\sum_{\lambda\in\Lambda} p_{\fP,\fM(\lambda)}\,\chgr\fM(\lambda).
\end{align*}

\begin{lema}\label{le:coeficientes de los pol}
The coefficients of $p_{\fN,\fL(\lambda)}$ and $p_{\fP,\fM(\lambda)}$ are given by
\begin{align*}
a_{\fN,\fL(\lambda),i}&=\#\{\mbox{composition factors of $\fN$ isomorphic to $\fL(\lambda)[i]$}\}\quad\mbox{and}\\
a_{\fP,\fM(\lambda),i}&=\#\{\mbox{subquotients isomorphic to $\fM(\lambda)[i]$ in a standard filtration of $\fP$}\}.
\end{align*}
\end{lema}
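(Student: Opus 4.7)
The plan is to reduce both statements directly to the uniqueness of the expansions in the two bases already set up: $\{\chgr\fL(\lambda)\}$ in $K({}_A\cG)$ (Corollary~\ref{cor:Llambda basis}) and $\{\chgr\fM(\lambda)\}$ inside $\chgr K({}_A\cG_{proj})$ (Proposition~\ref{prop:chgr fM base para proj}). Both are obtained by applying $\chgr$ to a filtration in the appropriate Grothendieck group and then identifying coefficients.

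For the first equality, I would start from a Jordan--H\"older series $0=\fN_0\subset\fN_1\subset\cdots\subset\fN_m=\fN$ in ${}_A\cG$. Each composition factor is isomorphic to some $\fL(\lambda)[i]$ by Theorem~\ref{teo:highest weight dim fin}, and, for fixed $(\lambda,i)$, the multiplicity is well-defined by the graded Jordan--H\"older theorem. Call this multiplicity $c_{\fN,\lambda,i}$. In $K({}_A\cG)$ we then have $[\fN]=\sum_{\lambda,i}c_{\fN,\lambda,i}[\fL(\lambda)[i]]$. Since the shift $[1]$ corresponds to multiplication by $t$ on $\chgr$, applying $\chgr$ yields
\begin{align*}
\chgr\fN=\sum_{\lambda\in\Lambda}\Bigl(\sum_{i\in\Z}c_{\fN,\lambda,i}\,t^i\Bigr)\chgr\fL(\lambda).
\end{align*}
Comparing with the defining expansion of $p_{\fN,\fL(\lambda)}$ and invoking the uniqueness part of Corollary~\ref{cor:Llambda basis} forces $a_{\fN,\fL(\lambda),i}=c_{\fN,\lambda,i}$.

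For the second equality I would proceed in the same spirit, but in $K({}_A\cG_{proj})$ and using a standard filtration $0=\fP_0\subset\fP_1\subset\cdots\subset\fP_m=\fP$, which exists by Theorem~\ref{teo:highest weight dim fin}. Each subquotient is of the form $\fM(\mu_j)[i_j]$; letting $b_{\fP,\lambda,i}$ denote the number of indices $j$ with $(\mu_j,i_j)=(\lambda,i)$, the filtration gives
\begin{align*}
\chgr\fP=\sum_{j=1}^m t^{i_j}\chgr\fM(\mu_j)=\sum_{\lambda\in\Lambda}\Bigl(\sum_{i\in\Z}b_{\fP,\lambda,i}\,t^i\Bigr)\chgr\fM(\lambda).
\end{align*}
By the uniqueness in Proposition~\ref{prop:chgr fM base para proj}, we conclude $p_{\fP,\fM(\lambda)}=\sum_i b_{\fP,\lambda,i}\,t^i$; in particular the multiplicities $b_{\fP,\lambda,i}$ depend only on $\fP$ and not on the chosen filtration, and $a_{\fP,\fM(\lambda),i}=b_{\fP,\lambda,i}$.

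There is no real obstacle here: the statement is essentially a bookkeeping consequence of the two basis results cited above. The only point that deserves a careful mention is that the ``number of subquotients'' in the second claim is well-defined independently of the filtration; this independence is automatic from the uniqueness of the expansion in the basis $\{\chgr\fM(\lambda)\}$ (and is of course also a manifestation of the Brauer reciprocity recorded in Theorem~\ref{prop:general facts}\ref{item:standard filtration}).
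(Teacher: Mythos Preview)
Your argument is correct: both equalities follow from the uniqueness of the coefficients in the respective bases, applied to the expansion coming from a composition series (resp.\ a standard filtration). This is essentially the ``bookkeeping'' viewpoint.

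The paper's proof is organized differently. For the first equality it identifies the right-hand side with $\dim\Hom_{{}_A\cG}(\fP(\lambda)[i],\fN)$ and argues by induction on the length of a composition series, using exactness of $\Hom_{{}_A\cG}(\fP(\lambda)[i],-)$. For the second, it observes that a standard filtration of $\fP$ splits as $B^-$-modules (standard modules being projective over $B^-$ by Lemma~\ref{le:verma is proj cover}), and then reads off the multiplicity as $\dim\Hom_{{}_{B^-}\cG}(\lambda_V\lambda[i-n_{top}],\fP)$ via the description of the $B^-$-socle of $\fM(\lambda)$. Your route is more elementary and closer to the basis statements already recorded; the paper's route, however, produces the explicit Hom-theoretic formulas for the multiplicities, and these formulas are precisely what is invoked in the proof of Theorem~\ref{teo:BGG}. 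So while your proof stands on its own, if you take this path you would still need to supply those Hom identifications when you reach the BGG reciprocity argument.
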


\begin{proof}
The first equality follows by induction on the length of a composition series of $\fN$. We use that the right hand side is  $\dim\Hom_{{}_A\cG}(\fP(\lambda)[i],\fN)$ and the exactness of $\Hom_{{}_A\cG}(\fP(\lambda)[i],-)$.

For the second one, we note that a standard filtration of $\fP$ induces a decomposition of $\fP$ (as $B^-$-module) into the direct sum of standard modules. Since the socle of $\fM(\lambda)$ (as $B^-$-module) is isomorphic to $\lambda_{V}\lambda$ by Lemma \ref{le:verma is proj cover}, the right hand side is $\dim\Hom_{{}_{B^-}\cG}(\lambda_V\lambda[i-n_{top}],\fP)$.
\end{proof}

The following is a graded version of the well-known BGG Reciprocity. In the general framework of \S\ref{subsec:finite case} this holds if $B^+$ and $B^-$ are related via an isomorphism similar to \eqref{eq:isos para bgg}, see \cite[Theorem 1.3]{Bellamy-Thiel-1}, and it is a consequence of Brauer Reciprocity. 

Given a polynomial $p(t,t^{-1})\in\Z[t,t^{-1}]$, we set $\overline{p(t,t^{-1})}=p(t^{-1},t)$. Since $A$ is Frobenius, for $\mu\in\Lambda$ there exist $\widehat{\mu}\in\Lambda$ and $s_{\widehat{\mu}}\in\Z$ 
such that $\fL(\widehat{\mu})[s_{\widehat{\mu}}]$ is the socle of $\fP(\mu)$ . Moreover, $\fP(\mu)$ is the injective hull 
of $\fL(\widehat{\mu})[s_{\widehat{\mu}}]$.

\begin{theorem}[\mbox{\cite[Corollary 12]{proj-bgg}}]\label{teo:BGG}
Let $\lambda,\mu\in\Lambda$. Then
\begin{align*}
p_{\fP(\mu),\fM(\lambda)}=t^{s_{\widehat{\mu}}}\,\overline{p_{\fM(\lambda),\fL(\widehat{\mu})}}.
\end{align*} 
In particular, $\widehat{\mu}=\mu$, if $A$ is symmetric, and $s_{\widehat{\mu}}=0$, if $A$ is graded symmetric. These properties hold if $A=\kuu_{(\cD(K),R)}(V)$ is 
a Drinfeld double as in \S\ref{subsub:double of bosonization}. 
\end{theorem}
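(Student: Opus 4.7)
The plan is to adapt the proof of \cite[Corollary 12]{proj-bgg} to this more general setting. The whole argument hinges on combining Lemma \ref{le:coeficientes de los pol} with the structural Lemmas \ref{le:verma is proj cover} and \ref{le:verma dual}, and then using the Frobenius structure of $A$ to interchange the two sides of the identity.

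First I would unpack both coefficients as dimensions of graded Hom spaces. By Lemma \ref{le:verma is proj cover}, every $\fM(\lambda)[i]$ is a projective–injective indecomposable $B^-$-module with simple socle $(\lambda_V\lambda)[i-n_{top}]$; hence any standard filtration of $\fP(\mu)$ splits over $B^-$, and Lemma \ref{le:coeficientes de los pol} gives
\begin{align*}
a_{\fP(\mu),\fM(\lambda),i}
\,=\,\dim\Hom_{{}_{B^-}\cG}\bigl((\lambda_V\lambda)[i-n_{top}],\,\fP(\mu)\bigr).
\end{align*}
Symmetrically, the coefficient of $t^j$ in $p_{\fM(\lambda),\fL(\widehat{\mu})}$ equals $\dim\Hom_{{}_A\cG}(\fP(\widehat{\mu})[j],\fM(\lambda))$.

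Next I would link the two Hom spaces through the graded linear dual $(-)^*$. Since $A$ is Frobenius, this functor sends an indecomposable projective to an indecomposable projective, reversing the grading. Combining Lemma \ref{le:verma dual} (which, after accounting for the reversal of the grading, identifies $\fM(\lambda)^*$ with a shifted standard module attached to $(\lambda_V\lambda)^*$) with the identification of the socle $\fL(\widehat{\mu})[s_{\widehat{\mu}}]$ of $\fP(\mu)$, I would identify $\fP(\mu)^*$ with a shifted $\fP(\widehat{\mu})$ and track the shifts to obtain
\begin{align*}
\Hom_{{}_{B^-}\cG}\bigl((\lambda_V\lambda)[i-n_{top}],\,\fP(\mu)\bigr)
\,\simeq\,
\Hom_{{}_A\cG}\bigl(\fP(\widehat{\mu})[s_{\widehat{\mu}}-i],\,\fM(\lambda)\bigr).
\end{align*}
Reading off the coefficient of $t^i$ on both sides yields the polynomial identity, since the coefficient of $t^i$ in $t^{s_{\widehat{\mu}}}\,\overline{p_{\fM(\lambda),\fL(\widehat{\mu})}}$ is precisely $a_{\fM(\lambda),\fL(\widehat{\mu}),s_{\widehat{\mu}}-i}$.

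Finally, the particular cases reduce to standard facts on Frobenius algebras: for symmetric $A$ the Nakayama permutation is trivial, so $\widehat{\mu}=\mu$, and for graded-symmetric $A$ the Nakayama grading shift vanishes, so $s_{\widehat{\mu}}=0$. For $A=\kuu_{(\cD(K),R)}(V)\simeq\cD(\BV(V)\#K)$, unimodularity of Drinfeld doubles yields symmetry, while a direct computation of the integral exploits the cancellation between the degree $-n_{top}$ contribution of $\BV(V)$ and the degree $+n_{top}$ contribution of $\BV(\oV)$ (which reflects \eqref{eq:dualidad lambda V lambda oV}) to place the integral in degree $0$, giving graded symmetry. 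The principal technical obstacle is the middle step: threading the grading shifts coherently through Lemma \ref{le:verma dual} so that the twist by $\lambda_V$ and the reversal of the grading combine to produce exactly $s_{\widehat{\mu}}-i$, rather than some other shift, on the right-hand side.
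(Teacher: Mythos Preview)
Your outline has the right overall shape, but the middle step does not go through as written, and the problem is structural rather than merely a matter of tracking shifts.

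First, a concrete misidentification: the graded linear dual does \emph{not} send $\fP(\mu)$ to a shift of $\fP(\widehat{\mu})$. The head of $\fP(\mu)^*$ is the dual of the socle of $\fP(\mu)$, namely $(\fL(\widehat{\mu})[s_{\widehat{\mu}}])^*$; since $\fL(\nu)^*\simeq\fL(\overline{\nu}^*)[-l_\nu]$ (this is \cite[(26)]{proj-bgg}, recorded in the paper's proof), one obtains $\fP(\mu)^*\simeq\fP(\overline{\widehat{\mu}}^*)[-l_{\widehat{\mu}}-s_{\widehat{\mu}}]$, which in general is not indexed by $\widehat{\mu}$.

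Second, and more importantly, duality together with Lemma~\ref{le:verma dual} alone cannot bridge the $B^-$-Hom on the left and the $A$-Hom on the right. Applying Frobenius reciprocity for $\Ind_{B^-}^A$ to the left-hand side produces $\Hom_{{}_A\cG}\bigl(\fW(\lambda_V\lambda)[i-n_{top}],\fP(\mu)\bigr)$, so what you are really comparing are composition multiplicities of $\fL(\widehat{\mu})$ in the coVerma module $\fW(\lambda_V\lambda)$ versus in $\fM(\lambda)$. These two modules are \emph{not} exchanged by $(-)^*$: one is built from $\BV(\oV)$, the other from $\BV(V)$. That their graded characters agree (up to the shift by $n_{top}$) is exactly the content of \cite[Theorem~10]{proj-bgg}, and it rests on the $H$-module isomorphism $\BV^n(\oV)\simeq\BV^n(V)^*$ of \eqref{eq:isos para bgg} together with the Poincar\'e-type pairing $\BV^n(V)\otimes\BV^{n_{top}-n}(V)\to\lambda_V$. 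The paper invokes this explicitly; your proposal omits it. Without this Nichols-algebra input the displayed isomorphism $\Hom_{{}_{B^-}\cG}\bigl((\lambda_V\lambda)[i-n_{top}],\fP(\mu)\bigr)\simeq\Hom_{{}_A\cG}\bigl(\fP(\widehat{\mu})[s_{\widehat{\mu}}-i],\fM(\lambda)\bigr)$ has no justification, however carefully the shifts are threaded. Your treatment of the symmetric, graded-symmetric, and Drinfeld-double cases is fine.
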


\begin{proof}
Let $\overline{\lambda}[l_\lambda]$ be the lowest weight of $\fL(\lambda)$. Then $\fL(\lambda)^*\simeq\fL(\overline{\lambda}^*)[-l_\lambda]$ as in \cite[(26)]{proj-bgg}. We then deduce that $\fP(\lambda)^*\simeq\fP(\overline{\widehat{\lambda}}^*)[-l_{\widehat{\lambda}}-s_{\widehat{\lambda}}]$ as \cite[Lemma 8 (vi)]{proj-bgg}. Notice that \cite[Theorem 10]{proj-bgg} also holds in this context because it relies on \eqref{eq:isos para bgg}. Therefore the equality in the statement follows by combining \cite[Theorem 10]{proj-bgg} with the linear isomorphisms
\begin{align*}
\Hom_{{}_A\cG}(\fP(\mu)[i],\fM(\lambda))&\simeq\Hom_{{}_A\cG}(\fM(\lambda)^*,\fP(\mu)[i]^*)\quad\mbox{and}\\
\Hom_{{}_{B^-}\cG}(\lambda_V\lambda[i-n_{top}],\fP(\mu))&\simeq\Hom_{{}_A\cG}(\fP(\overline{\widehat{\lambda}}^*)[i-n_{top}-l_{\widehat{\lambda}}-s_{\widehat{\lambda}}],\fW(\lambda_V\lambda)^*),
\end{align*}
see the proof of \cite[Corollary 11 and 12]{proj-bgg}.

If $A$ is symmetric, then the head and the socle of $\fP(\mu)$ are isomorphic as $H$-modules, that is $\widehat{\mu}=\mu$. The definition of graded symmetric is that these modules are isomorphic as graded $H$-modules, that is $s_{\widehat{\mu}}=0$.

If $A=\kuu_{(\cD(K),R)}(V)$, then $A$ is symmetric by \cite{MR0347838,MR1220770}, cf.  \cite[p. 488, (3)]{MR1435369}. Moreover, $A$ is graded symmetric by the same proof of \cite[Lemma 3.1 (i)]{proj-bgg}.
\end{proof}

If we evaluate the polynomials of the statement at $t=1$, we obtain the BGG Reciprocity:
\begin{align*}
[\fP(\mu):\fM(\lambda)]=[\fM(\lambda):\fL(\widehat{\mu})]
\end{align*}
for all $\lambda,\mu\in\Lambda$. The next equivalence is a direct consequence of this equality.

\begin{cor}
A standard module is projective if and only if it is simple.\qed 
\end{cor}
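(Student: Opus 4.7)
The plan is to use the BGG Reciprocity $[\fP(\mu):\fM(\lambda)] = [\fM(\lambda):\fL(\widehat\mu)]$ obtained from Theorem \ref{teo:BGG} at $t=1$, together with the explicit graded structure $\fM(\kappa)\simeq\BV(V)\ot\kappa$ in degrees $[-n_{top},0]$ and the fact that $A=\kuu_{(H,R)}(V)$ is Frobenius.

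For the direction ``simple $\Rightarrow$ projective'', assume $\fM(\lambda)=\fL(\lambda)$. The crux is the graded observation that, for any $\kappa\in\Lambda$, $[\fM(\kappa):\fL(\lambda)]=\delta_{\kappa,\lambda}$. Indeed, $\fM(\kappa)$ is concentrated in degrees $[-n_{top},0]$ with top component the simple $H$-module $\kappa$ and bottom component $\lambda_V\kappa$, while $\fL(\lambda)=\fM(\lambda)$ has the same graded range. Hence any graded composition factor $\fL(\lambda)[d]$ of $\fM(\kappa)$ forces $d=0$ (so that both its highest weight and lowest weight stay in $[-n_{top},0]$), and then its highest weight $\lambda$ occurs as an $H$-subquotient of the simple module $\fM(\kappa)_0=\kappa$, forcing $\lambda=\kappa$. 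Now let $\nu$ be the unique weight with $\widehat\nu=\lambda$. By BGG, $[\fP(\nu):\fM(\kappa)]=[\fM(\kappa):\fL(\lambda)]=\delta_{\kappa,\lambda}$, so by Theorem \ref{prop:general facts}\ref{item:standard filtration} the standard filtration of $\fP(\nu)$ consists of a single copy of $\fM(\lambda)$; thus $\fP(\nu)\simeq\fM(\lambda)=\fL(\lambda)$. Comparing heads gives $\nu=\lambda$, so $\fP(\lambda)=\fM(\lambda)$ and $\fM(\lambda)$ is projective.

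For the converse, assume $\fM(\lambda)=\fP(\lambda)$ is projective. Since $A$ is Frobenius, projective modules are injective, so any subquotient of a standard filtration isomorphic to the projective-injective module $\fP(\lambda)$ must split off as a direct summand of the ambient $\fP(\nu)$; indecomposability of $\fP(\nu)$ forces $\nu=\lambda$. Hence $[\fP(\nu):\fM(\lambda)]=\delta_{\nu,\lambda}$, and BGG gives $[\fM(\nu):\fL(\widehat\lambda)]=\delta_{\nu,\lambda}$. Since $\fL(\nu)$ is the head of $\fM(\nu)$, setting $\nu=\widehat\lambda$ forces $\widehat\lambda=\lambda$. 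A second application of BGG yields $[\fM(\lambda):\fL(\mu)]=[\fP(\kappa):\fM(\lambda)]=\delta_{\kappa,\lambda}=\delta_{\mu,\lambda}$, where $\kappa$ is the unique weight with $\widehat\kappa=\mu$. Thus $\fM(\lambda)=\fL(\lambda)$ is simple.

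The main obstacle is the degree analysis in the first direction: the ungraded BGG Reciprocity alone does not seem to rule out $\fL(\lambda)=\fM(\lambda)$ appearing as an ``internal'' composition factor of some $\fM(\kappa)$ with $\kappa\neq\lambda$, since ungraded dimension inequalities do not suffice once $\dim\kappa$ exceeds $\dim\lambda$. One really needs the graded structure of the standard modules---specifically that both the extreme weights of $\fL(\lambda)=\fM(\lambda)$ lie at the two boundary degrees---to confine the shift $d$ to $d=0$ and then compare simple $H$-modules at that single degree.
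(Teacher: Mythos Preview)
Your argument is correct and is precisely the kind of elaboration the paper's one-line ``direct consequence of this equality'' invites; the paper itself supplies no further details beyond pointing to BGG reciprocity.

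One small presentational slip in the converse direction: from $[\fP(\nu):\fM(\lambda)]=\delta_{\nu,\lambda}$, BGG reciprocity actually yields $[\fM(\lambda):\fL(\widehat\nu)]=\delta_{\nu,\lambda}$, not $[\fM(\nu):\fL(\widehat\lambda)]=\delta_{\nu,\lambda}$ as you wrote. The formula you stated is nevertheless true --- it follows from the more immediate identity $[\fP(\lambda):\fM(\nu)]=\delta_{\nu,\lambda}$ (which you have because $\fP(\lambda)=\fM(\lambda)$) via BGG with the roles of the two indices swapped --- so the remainder of your deduction goes through unchanged.
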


Proposition \ref{prop:chgr fM base para proj} and Lemmas \ref{le:verma is proj cover} and \ref{le:verma dual} can be restated for $\fW(\lambda)$ instead of $\fM(\lambda)$. In particular, the projective modules admit costandard filtrations and there exist polynomials $p_{\fP,\fW(\lambda)}$ such that $\chgr\fP=\sum_{\lambda\in\Lambda}p_{\fP,\fW(\lambda)}\chgr\fW(\lambda)$. Thus, the proof of the next result is equal to that of \cite[Theorem 21]{proj-bgg} using Lemma \ref{le: W ot M}.

\begin{theorem}[\mbox{\cite[Theorem 21]{proj-bgg}}]\label{teo:tensor prod of proj}
Let $\fP$ and $\fQ$ be projective in ${}_A\cG$. Then
\begin{align*}
\fP\ot\fQ\simeq\oplus_{\lambda,\mu\in\Lambda}\,p_{\fP,\fW(\lambda)}\,p_{\fQ,\fM(\mu)}\,\fInd(\lambda\mu).
\end{align*}
\qed
\end{theorem}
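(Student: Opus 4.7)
The plan is to construct a filtration of $\fP\ot\fQ$ whose associated graded pieces are shifted induced modules $\fInd(\lambda\mu)[k]$, to split the filtration by projectivity, and to read off the multiplicities via the polynomial coefficients $a_{\fP,\fW(\lambda),i}$ and $a_{\fQ,\fM(\mu),j}$ from Lemma \ref{le:coeficientes de los pol}. This essentially mirrors the argument of \cite[Theorem 21]{proj-bgg}, with Lemma \ref{le: W ot M} playing the role of its analogue there.

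First I would verify that $\fInd(\lambda\mu)=A\ot_H(\lambda\mu)$ is projective in ${}_A\cG$ for every $\lambda,\mu\in\Lambda$. Since $H$ is semisimple, $\lambda\mu$ is projective as an $H$-module. Moreover, the triangular decomposition expresses $A$ as $\BV(V)\ot H\ot\BV(\oV)$ linearly, and from this one deduces that $A$ is free as a right $H$-module, so the induction functor $A\ot_H(-)$ sends projective $H$-modules to projective objects of ${}_A\cG$.

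Next, fix a costandard filtration $0=\fP_0\subset\cdots\subset\fP_n=\fP$ with $\fP_l/\fP_{l-1}\simeq\fW(\lambda_l)[i_l]$ and a standard filtration $0=\fQ_0\subset\cdots\subset\fQ_m=\fQ$ with $\fQ_k/\fQ_{k-1}\simeq\fM(\mu_k)[j_k]$; both exist by the standard/costandard analogues of Theorem \ref{prop:general facts}(iii), as recalled just before the theorem statement. Since tensoring over $\ku$ is exact, the subspaces $F_{l,k}=\fP_{l-1}\ot\fQ+\fP_l\ot\fQ_k$, ordered lexicographically in $(l,k)$, form a filtration of $\fP\ot\fQ$ by $A$-submodules with successive subquotients
\begin{align*}
(\fP_l/\fP_{l-1})\ot(\fQ_k/\fQ_{k-1})\simeq\fW(\lambda_l)[i_l]\ot\fM(\mu_k)[j_k]\simeq\fInd(\lambda_l\mu_k)[i_l+j_k],
\end{align*}
where the second isomorphism is Lemma \ref{le: W ot M} combined with the compatibility of the graded tensor product with shifts. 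Because each subquotient is projective by the previous step, every short exact sequence in the filtration splits, so $\fP\ot\fQ$ decomposes as the direct sum of its subquotients.

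The last step is to tally multiplicities: by Lemma \ref{le:coeficientes de los pol}, for fixed $\lambda,\mu\in\Lambda$ the number of pairs $(l,k)$ with $(\lambda_l,\mu_k)=(\lambda,\mu)$ and $i_l+j_k=N$ equals $\sum_{i+j=N}a_{\fP,\fW(\lambda),i}\,a_{\fQ,\fM(\mu),j}$, which is precisely the coefficient of $t^N$ in the product $p_{\fP,\fW(\lambda)}\,p_{\fQ,\fM(\mu)}$. Summing over $N$ and interpreting $p\cdot\fInd(\lambda\mu):=\bigoplus_k a_k\,\fInd(\lambda\mu)[k]$ for $p=\sum_k a_k t^k$ yields the stated isomorphism. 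The main technical obstacle, as in \cite{proj-bgg}, is the careful bookkeeping of the grading shifts across the double filtration so that the convolution of multiplicities is recorded exactly by the polynomial product $p_{\fP,\fW(\lambda)}\,p_{\fQ,\fM(\mu)}$.
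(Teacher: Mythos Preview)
Your proposal is correct and follows essentially the same approach as the paper, which simply defers to \cite[Theorem 21]{proj-bgg} with Lemma \ref{le: W ot M} as the key input. One small bibliographic point: Lemma \ref{le:coeficientes de los pol} is stated only for the $\fM$-polynomials, so when you invoke it for $p_{\fP,\fW(\lambda)}$ you are implicitly using the restatement for $\fW(\lambda)$ that the paper records just before the theorem; it would be cleaner to cite that remark explicitly rather than the lemma itself.
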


\subsection{Rigid modules}

In the general setting of \S\ref{sec:a general framework}, a simple module satisfying $\fL(\lambda)\simeq\lambda$ is called {\it rigid} \cite[\S3.8]{Bellamy-Thiel-1}. These modules play an 
important role in the context of rational Cherednik algebras \cite{MR3519506}. If $A$ is a Drinfeld double, the one-dimensional rigid modules are characterized by \cite[Proposition 
10]{MR1220770} since these are the group-like elements of $A^*$. 

\begin{prop}
Let $\kuu_{(\cD(K),R)}(V)$ be as in \S\ref{subsub:double of bosonization} and $\lambda$ a one-dimensional weight of $\cD(K)$. Then $\lambda$ is a rigid $\kuu_{(\cD(K),R)}(V)$-module if 
and only if 
there exist group-like elements $g\in G(K)$ and $\eta\in G(K^*)$ such that $\lambda=\eta\ot g$ and $g\ot\eta$ is a central element of $\cD(K)$ acting trivially on $V$.

In particular, if $K$ is the group algebra of an abelian group, these are all the rigid modules of $\kuu_{(\cD(K),R)}(V)$. 
\end{prop}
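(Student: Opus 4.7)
The plan is to translate rigidity of a one-dimensional weight $\lambda$ into the existence of a character extending it to $A=\kuu_{(\cD(K),R)}(V)$, and then apply the characterization of characters of a Drinfeld double. Recall from \S\ref{subsub:double of bosonization} that $A\simeq\cD(L)$ for $L=\BV(V)\#K$. A one-dimensional weight $\lambda$ is rigid precisely when $\fL(\lambda)\simeq\lambda$, equivalently when the $\cD(K)$-action on $\lambda$ extends to an algebra map $\chi:A\to\ku$.

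My first reduction would be to restrict $\chi$ to $\cD(K)$. The subalgebras $\BV(V),\BV(\oV)\subset A$ are finite-dimensional Nichols algebras, hence graded connected with nilpotent augmentation ideal; in particular they are local rings whose unique algebra map to $\ku$ is the counit. Therefore any character $\chi$ of $A$ kills $V$ and $\oV$, so it is determined by $\chi|_{\cD(K)}=\lambda$. A one-dimensional $\cD(K)$-module admits a unique decomposition $\lambda=\eta\otimes g$ with $\eta\in G(K^*)$ and $g\in G(K)$, and the question becomes: which pairs $(g,\eta)$ extend to a character of $A$?

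To answer this, I would invoke \cite[Proposition 10]{MR1220770} as suggested in the text: since $A=\cD(L)$ is a Drinfeld double, its characters (= group-likes of $A^*$) correspond to pairs of group-likes $(g,\eta)\in G(L)\times G(L^*)$ whose associated element $g\otimes\eta\in\cD(L)$ is central. Because the Nichols algebras $\BV(V)$ and $\BV(\oV)$ are connected, $G(L)=G(K)$ and $G(L^*)=G(K^*)$, so $g\otimes\eta$ already lies in $\cD(K)\subset\cD(L)=A$. The centrality of $g\otimes\eta$ in $\cD(L)$ decomposes into commutation with the subalgebra $\cD(K)$ (equivalently, centrality in $\cD(K)$) and commutation with the generators $V$ and $\oV$. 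Since $g\otimes\eta$ is group-like, its commutator action on $V\subset A$ equals its adjoint action, which in turn coincides with the original $\cD(K)$-module action on $V$ (this is built into the construction of $\bU_{(H,R)}(V)$ in \S\ref{sec:a general construction} via the relations \eqref{eq:el de U}). Hence commutation with $V$ is equivalent to $g\otimes\eta$ acting trivially on $V$; and trivial action on $V$ implies trivial action on $\oV\simeq V^*$ because $g\otimes\eta$ is invertible and the $\cD(K)$-action on $\oV$ is the dual action.

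The main obstacle I expect is the identification of the adjoint action of $g\otimes\eta\in\cD(K)\subset A$ on $V\subset A$ with the given $\cD(K)$-module action on $V$; this is standard but requires careful bookkeeping using the smash product structure and the cross relations \eqref{eq:el de U}. The final clause of the proposition is then immediate: if $K=\ku G$ with $G$ abelian, then $\cD(K)\simeq\ku(G\times\widehat{G})$ is commutative, so every simple $\cD(K)$-module is one-dimensional, and consequently every rigid $A$-module arises from a one-dimensional weight of the form above.
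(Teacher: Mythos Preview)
Your proof is correct and follows essentially the same route as the paper: both arguments identify $A$ with $\cD(L)$ for $L=\BV(V)\#K$, invoke \cite[Proposition~10]{MR1220770} to characterize the group-likes of $A^*$ as pairs $(g,\eta)\in G(L)\times G(L^*)$ with $g\otimes\eta$ central, reduce $G(L)=G(K)$ and $G(L^*)=G(K^*)$ via connectedness of the Nichols algebras, and then unpack centrality in $A$ as centrality in $\cD(K)$ together with trivial action on $V$ (the $\oV$ condition being automatic by duality). Your preliminary observation that any character of $A$ must kill $V$ and $\oV$ is a slight redundancy once Radford's result is in play, but it does no harm.
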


\begin{proof}
As we said before, $\lambda$ is rigid if and only if $\lambda$ is a group-like element of $(\kuu_{(\cD(K),R)}(V))^*\simeq\left(\cD(\BV(V)\#K)\right)^*$. By \cite[Proposition 10]{MR1220770}, 
this is equivalent to the existence of $g\in G(\BV(V)\#K)$ and $\eta\in G((\BV(V)\#K)^*)$ with $\lambda=\eta\ot g$ and $g\ot\eta$ in the center of $\kuu_{(\cD(K),R)}(V)$. We first note that 
$G(\BV(V)\#K)=G(K)$ and $G((\BV(V)\#K)^*)=G(K^*)$. On the other hand, $g\ot\eta$ is central if and 
only if it is central in $\cD(K)$ and acts trivially on $V$ and on $\oV$. Since $g\ot\eta$ is a group-like element in $\kuu_{(\cD(K),R)}(V)$, $g\ot\eta$ acts trivially on $V$ 
if and only if it does on $\oV$. Hence the proposition follows. For the last part, we note that $\cD(K)$ is the group algebra of an abelian group if so is 
$K$, and consequently all its weights are one-dimensional.
\end{proof}


\end{document}